\newcommand\@avprod[2]{%
  {\sbox0{$\m@th#1\prod$}%
   \vphantom{\usebox0}%
   \ooalign{%
     \hidewidth
     \smash{\vrule height\dimexpr\ht0+1pt\relax depth\dimexpr\dp0+1pt\relax}%
     \hidewidth\cr
     $\m@th#1\prod$\cr
   }%
  }%
}
\newcommand{\avprod}{\mathop{\mathpalette\@avprod\relax}\displaylimits}
\title[An improved bilinear restriction estimate for paraboloid in $\R^3$]{an improved bilinear restriction estimate for the paraboloid in $\R^3$}
\author{Changkeun Oh}
\date{}
\def\R{\mathbb{R}}
\def\nint{\mathop{\diagup\kern-13.0pt\int}}
\def\T{\mathbb{T}}
\def\beq{\begin{equation}}
\def\endeq{\end{equation}}
\def\bg{\begin{gathered}}
\def\eg{\end{gathered}}
\def\mc{\mathcal}
\def\lesim{\lesssim}
\numberwithin{equation}{section}
\theoremstyle{plain}
\newtheorem{thm}{Theorem}[section]
\newtheorem{prop}[thm]{Proposition}
\newtheorem{lem}[thm]{Lemma}
\newtheorem{defi}[thm]{Definition}
\newtheorem*{conj*}{Conjecture}
\newtheorem*{openproblem*}{Open Problem}
\begin{document}
\maketitle

\begin{abstract}
We obtain a sharp bilinear restriction estimate for the paraboloid in $\R^3$ for $q>13/4.$  
\end{abstract}

\section{Introduction}

Define an extension operator associated to the paraboloid in $\R^3$ by
\begin{equation}\label{extensionoperator}
Ef(x_1,x_2,x_3):=\int_{[-1,1]^2}f(\xi_1,\xi_2)
e\big(\xi_1 x_1+\xi_2 x_2+(\xi_1^2+\xi_2^2)x_3 \big)\,d\xi_1 d\xi_2
\end{equation}
for $f \in L^1([-1,1]^2)$.
Here, $e(a):=e^{2\pi i a}$ for $a \in \R$.
We say that two functions $f_1$ and $f_2$ are \textit{separated} provided that
\begin{equation}\label{separated}
    \mathrm{dist}(\mathrm{supp}(f_1),\mathrm{supp}(f_2) ) \gtrsim 1. 
\end{equation}
It is conjectured by Tao, Vargas and Vega in \cite{MR1625056} that the following bilinear restriction estimate
\begin{equation}\label{bilinear}
\||Ef_1Ef_2|^{1/2}\|_{L^q(\mathbb{R}^3)} \leq C_{p,q}\big(\|f_1\|_{L^p([-1,1]^2)}\|f_2\|_{L^p([-1,1]^2)}\big)^{\frac12}
\end{equation}
holds true for every pair of  separated $f_1$ and $f_2$ if and only if
\begin{equation}\label{bilinearregion}
    q \geq 3, \;\;\;\;\; \frac{5}{q}+\frac{3}{p} \leq 3, \;\;\; \text{and} \;\;\; \frac{5}{q}+\frac{1}{p} \leq 2.
\end{equation}

Our main theorem is as follows.

\begin{thm}\label{bilinearrestriction}
For every pair $p,q$ satisfying
\begin{equation}\label{goalpair}
    q>13/4, \;\;\;\;\; \frac{5}{q}+\frac{3}{p} < 3, \;\;\; \text{and} \;\;\; \frac{5}{q}+\frac{1}{p} < 2,
\end{equation}
it holds that
\begin{equation}
    \||Ef_1Ef_2|^{1/2}\|_{L^q(\mathbb{R}^3)} \leq C_{p,q}\big(\|f_1\|_{L^p([-1,1]^2)}\|f_2\|_{L^p([-1,1]^2)}\big)^{\frac12}
\end{equation}
for every pair of separated functions $f_1$ and $f_2$. Here the constant $C_{p,q}$ depends on $p,q$ and the implied constant in \eqref{separated}.
\end{thm}

The bilinear restriction problem is strongly tied to the restriction problem. The restriction conjecture states that the estimate
\begin{equation}\label{linear}
    \|Ef\|_{L^q(\R^3)} \leq C_{p,q} \|f\|_{L^{p}([-1,1]^2)}
\end{equation}
holds true for every function $f$, if and only if 
\begin{equation}\label{linearregion}
    q>3, \;\;\; \text{and} \;\;\; \frac{2}{q}+\frac{1}{p}\leq 1.
\end{equation}

\begin{figure}[ht]
\centering
\begin{minipage}[b]{0.48\linewidth}
\includegraphics[width=6.5cm]{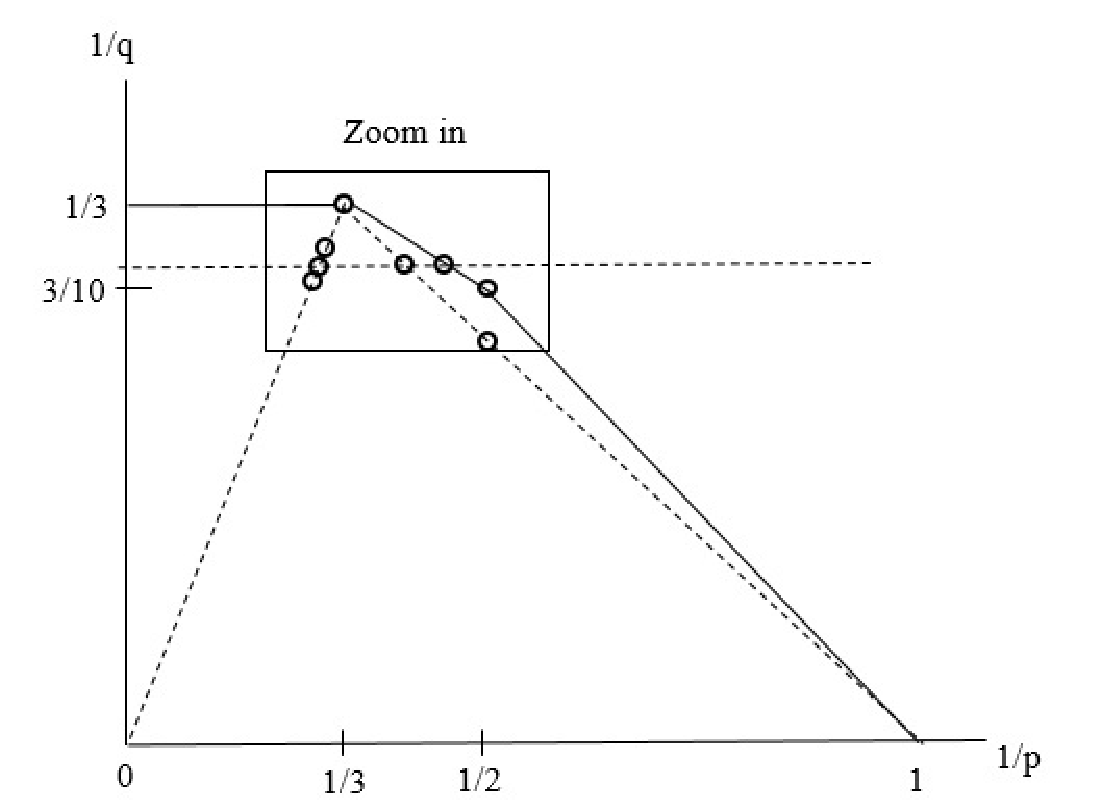}
\caption{}
\label{fig:minipage1}
\end{minipage}
\quad
\begin{minipage}[b]{0.48\linewidth}
\includegraphics[width=8cm]{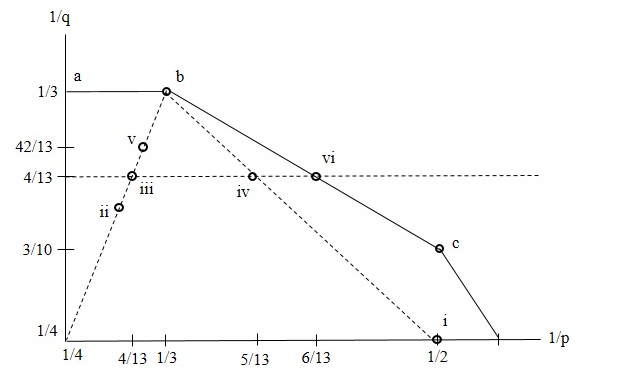}
\caption{Zoomed in}
\label{logo}
\end{minipage}
\end{figure}

The region \eqref{linearregion} is the trapezoidal region bounded by the points $a,b$ in Figure 2, $(0,0)$, and $(1,0)$\footnote{The points $(0,0)$ and $(1,0)$ are not represented in Figure 2.}, except for the upper line between $a$ and $b$ inclusive. The region \eqref{bilinearregion} is the pentagonal region bounded by the points $a,b,c$, $(0,0)$, and $(1,0)$, including the upper line mentioned previously. Note that the region \eqref{bilinearregion} is wider than the region \eqref{linearregion}.

The connection between the bilinear restriction estimate and the restriction estimate was discovered by Tao, Vargas, and Vega in \cite{MR1625056}, where they proved that the bilinear restriction estimate for a pair $(p,q)$ on the region \eqref{linearregion} implies the restriction estimate for the same pair $(p,q)$. The converse is also true by a simple application of H\"{o}lder's inequality. 

Let us briefly mention the recent progress on the restriction and bilinear restriction problems for paraboloid in $\R^3$.
In 2003, Tao \cite{MR2033842} proved the sharp bilinear restriction estimate for the paraboloid in $\R^3$ for the pair $(p,q)=(2,10/3+\epsilon)$ for an arbitrary number $\epsilon>0$ (the point $c$)\footnote{In the paper, it is proved the sharp $L^2$ bilinear restriction estimate for the paraboloid in $\R^n$ up to the endpoint  for all the dimensions $n$.}, which implies the restriction estimate for the paraboloid for $q>10/3$ by \cite{MR1625056}. His proof is based on Wolff's two ends argument in \cite{MR1836285}, where Wolff proved the sharp bilinear restriction estimate for the cone. The (linear) restriction estimate of Tao is improved by Bourgain and Guth \cite{MR2860188} to the range $q>3.3$ (the point $ii$), where they introduced the multilinear technique and combined it with some Kakeya estimate to get a better restriction estimate. Recently, Guth \cite{MR3454378} improved the restriction estimate  to the range $q>13/4$ (the point \textit{iii}). More precisely, he introduced the notion of a broad function and proved a broad function estimate for $q>13/4$ by using polynomial partitioning. This broad function estimate is slightly weaker than the bilinear restriction estimate, but the argument of \cite{MR1625056} still works equally well with the broad function estimate, so he was able to prove the restriction estimate for the same range of $q$. The restriction estimate of Guth is extended to the point $iv$ by Shayya \cite{MR3694293} (see also \cite{Kim2017SomeRO}).
The most recent result  is due to Wang \cite{Wang2018ARE} (the point \textit{v}), where she proved the restriction estimate for $q>3+3/13$ by proving the broad function estimate for the same range of $q$. Her proof of the broad function estimate combines Wolff's two ends argument with polynomial partitioning. For earlier results, we refer to Tao, Vargas and Vega \cite{MR1625056}, in particular, Table 1 on page 969 of their paper. 

Our bilinear restriction theorem improves Tao's sharp bilinear restriction estimates (the point $c$) to the range $q>13/4$ (the point $vi$). Also, our theorem recovers the broad function estimate of Guth and the restriction estimate of Shayya by the arguments of \cite{MR1625056}.
It looks plausible to generalize our result to improved $(n-1)$-linear restriction estimate for paraboloid in $\R^n$ as all the tools used in this paper are still available in high dimensions. However, for the sake of readability, we focus only on the three dimension. 

One natural question is whether one can generalize this result to more general surfaces under certain conditions. Bejenaru introduced interesting curvature conditions in \cite{bejenaru2020optimal}, and he proved  sharp $L^2 \rightarrow L^{10/3}$ bilinear  restriction estimates for general surfaces in $\R^3$ satisfying the conditions. His conditions are so general that even a surface with a vanishing principal curvature (for example, a cone) satisfies the conditions. Interestingly, our theorem is not always true for surfaces satisfying his conditions. For example, \cite{MR1748920} proved that \eqref{bilinear} is not true for a cone for any pair $(p,q)$ with $q<10/3$ and $p=\infty$. 
\\

The proof of Theorem \ref{bilinearrestriction} is built on the arguments of \cite{MR3454378} and \cite{bejenaru2020optimal}. Let us compare our proof with Guth's proof and explain the obstacle of our problem. By the wave packet decomposition (Lemma \ref{wpd}), we can decompose the functions $Ef$ into wave packets $Ef_{T}$ so that each wave packet $Ef_{T}$ is ``essentially supported'' on the tube $T$. In the study of the restriction problem, Guth applied polynomial partitioning and reduced the problem to some lower dimensional restriction problem in the sense that all the ``significant'' wave packets $Ef_T$ are contained in a thin neighborhood of a variety. Then he proved the lower dimensional restriction estimate. However, in our bilinear restriction problem, since the bilinear operator involves two functions, even if we can still apply polynomial partitioning to the bilinear operator, it is difficult to make all of the significant wave packets of $Ef_1$ and $Ef_2$ be contained in a thin neighborhood of a variety. This is the main obstacle of our problem.

Here are our ideas. By following the arguments of Guth, we reduce to the situation where all the significant wave packets of $Ef_{1}$ are contained in a thin neighborhood of a variety. Let us pretend that the variety is a two-dimensional plane in this paragraph. Then we apply some pigeonholing argument to the wave packets of $Ef_2$ so that all the significant wave packets form some fixed angle between the tube and the variety. If the angle gets smaller, then our wave packets of $Ef_{2}$ get closer to the thin neighborhood of the variety, and it gets closer to the lower dimensional problem, which can be dealt with by following the argument of Guth. On the hand hand, if the angle gets larger, then we observe that an intersection of the tube and the thin neighborhood of the variety gets smaller. This geometric observation gives a better $L^2$-estimate than usual (see Lemma \ref{L2estimate}). We quantify these two observation and combine them so that no matter what the angle is it gives the desired estimate. However, there is one additional issue: If the angle between a tube and a variety is ``almost'' perpendicular, then it is too far from a lower dimension situation and we cannot imitate the argument of Guth. In this case, we apply polynomial partitioning one more time as in \cite{bejenaru2020optimal}, and this takes care of the case (see a high angle dominant case (Subsection \ref{highangle})).

\subsection{Notation}
For each ball $B_R$ of radius $R$ with the center $c(B_R)$,
define the weight function 
\begin{equation}
    w_{B_R}(x):=\Big(1+\Big|\frac{x-c(B_R)}{R}\Big|\Big)^{-100},
\end{equation}
and the weighted integral
\begin{equation}
    \|F\|_{L^p(w_{B_R})}:=\Big( \int |F(x)|^pw_{B_R}(x)\,dx \Big)^{1/p}
\end{equation}
for every function $F \in L^{\infty}(\R^3)$.
For every measurable set $A$ with positive Lebesgue measure, we define the averaged $L^2$ integral by
\begin{equation}
    \|f\|_{L^2_{\mathrm{avg}}(A)}:= \Big( \frac{1}{|A|} \int_{A}|f|^2 \Big)^{1/2}.
\end{equation}
Note that 
\begin{equation}\label{trivialbound}
\|f\|_{L^2_{\mathrm{avg}}(A)} \leq \|f\|_{L^{\infty}(A)}.
\end{equation}

We write $A(R) \leq \mathrm{RapDec}(R)B$ to mean that for any power $\beta$, there is a constant $C_{\beta}$ such that
\begin{equation}
    A(R) \leq C_{\beta}R^{-\beta}B \;\; \text{for all $R \geq 1$}.
\end{equation}
Let us introduce the notation $
\avprod_{i=1}^{2}{a_i}:=|a_1a_2|^{1/2}$.
For $f_1,f_2$ and some quantities $A(f_1,f_2)$ and $B(f_1,f_2)$, we write $A(f_1,f_2) \lessapprox B(f_1,f_2)$ to mean that
\begin{equation}\label{lessapprox}
    A(f_1,f_2) \leq CB(f_1,f_2) + \mathrm{RapDec}(R)\avprod_{i=1}^{2}{\|f_i\|_2}.
\end{equation} Here, the constant $C$ is independent of $f_1, f_2$ and $R$. Note that this notation is not conventional.

For two non-negative numbers $A_1$ and $A_2$, we write $A_1 \lesim A_2$ to mean that there exists a constant $C$ such that $A_1 \le C A_2$. Similarly, we use $O(A_1)$ to denote a number whose absolute value is smaller than $CA_1$ for some constant $C$. We write $A_1 \simeq A_2$ if $A_1 \lesssim A_2$ and $A_2 \lesssim A_1$.
We also write $A_1 \ll A_2$ if $CA_1 \leq A_2$ for some sufficiently large number $C$.

For every set $S \subset \R^3$ and number $\rho>0$, we denote by $N_{\rho}(S)$ the $\rho$-neighborhood of $S$.
For every polynomial $P$, we denote by $\mc{Z}(P)$ the zero set of the polynomial $P$.
We introduce two parameters $\epsilon>0$ and $\delta>0$. The parameter $\delta>0$ will be much smaller than $\epsilon$. 

\subsection*{Acknowledgements}

Part of this work was done under the support of the NSF grant DMS-1800274. The author would like to thank his advisor Shaoming Guo for valuable comments. The author also would like to thank Youngwoo Koh for introducing the paper \cite{MR4205111} to him. The author also would like to thank Shengwen Gan for introducing the paper \cite{MR1748920}, where a counterexample for a bilinear restriction estimate for cone in $\mathbb{R}^3$ is constructed. The author also would like to thank the referees for carefully reading the manuscript and giving valuable suggestions.

\section{Preliminaries}\label{prelim}

We review a wave packet decomposition. Let us define some notation first. We decompose the square $[-1,1]^2$ into the dyadic squares $\theta$ of side length $R^{-1/2}$. Let $w_{\theta}$ denote the bottom left corner of $\theta$. Let $v_{\theta}$ denote the normal vector to the paraboloid at the point $(w_{\theta},|w_{\theta}|^2)$. We denote by $\mathcal{P}(R^{-1/2})$ the collection of the squares. Let $\T(\theta)$ denote a set of tubes covering $B_R \subset \R^3$, that are parallel to $v_{\theta}$ with radius $R^{1/2+\delta}$ and length $CR$. Denote $\T:=\cup_{\theta \in \mathcal{P}(R^{-1/2}) }\T(\theta)$. For each $T \in \T(\theta)$, let $v(T)$ denote the direction $v_{\theta}$ of the tube.

\begin{lem}[Wave packet decomposition]\label{wpd}
If $f \in L^2([-1,1]^2)$ then for each $T \in \mathbb{T}$ we can choose a function $f_T$ so that the following holds true:
\begin{enumerate}
    \item If $T \in \mathbb{T}(\theta)$ then $\mathrm{supp}(f_{T}) \subset 3\theta$;
    \item  If $x \in B_R \setminus T$, then $|Ef_T(x)| =\mathrm{RapDec}(R)\|f\|_2$;
    \item  For any $x \in B_R$, $|Ef(x)-\sum_{T \in \mathbb{T}}Ef_T(x)|= \mathrm{RapDec}(R)\|f\|_{L^2}$;
    \item If $T_1,T_2 \in \mathbb{T}(\theta)$ and $T_1,T_2$ are disjoint, then $|\int f_{T_1}\bar{f}_{T_2}| =\mathrm{RapDec}(R) \int_{\theta}|f|^2$;
    \item $\sum_{T \in \mathbb{T}(\theta)} \int_{[-1,1]^2}|f_T|^2 \lesssim \int_{\theta}|f|^2$.
\end{enumerate}

\end{lem}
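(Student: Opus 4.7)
The plan is to construct the wave packets via a two-step phase-space localization: first localize $f$ in frequency at scale $R^{-1/2}$ using a smooth partition of unity indexed by the caps $\theta$, then for each $\theta$ perform a Fourier-series expansion indexed by a spatial lattice of scale $R^{1/2+\delta}$ transverse to $v_\theta$. Concretely, fix a smooth partition of unity $\{\phi_\theta\}_{\theta \in \P(R^{-1/2})}$ with $\phi_\theta$ supported in $2\theta$ and $\sum_\theta \phi_\theta \equiv 1$ on $[-1,1]^2$. For each $\theta$, expand $\phi_\theta f$ as a Fourier series on a cube of side $O(R^{-1/2})$ containing $2\theta$, obtaining coefficients $c_{\theta,y}$ on a dual lattice; parameterize $T \in \T(\theta)$ by $y \in R^{1/2+\delta}\Z^2$, taking $T(\theta,y)$ to be the $R^{1/2+\delta}$-tube around the line $(x_1,x_2) = -y - 2 x_3 w_\theta$ (parallel to $v_\theta$), and set $f_T(\xi) := c_{\theta,y}\, \phi_\theta(\xi)\, e^{2\pi i y \cdot \xi}$.

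Property (1) is immediate since $\mathrm{supp}(\phi_\theta) \subset 2\theta \subset 3\theta$. Property (3) follows from $\sum_\theta \phi_\theta \equiv 1$ on $\mathrm{supp}(f)$ combined with the Fourier-series reproducing formula for $\phi_\theta f$, with the truncation error absorbed into the $\mathrm{RapDec}(R)$ term. Property (5) reduces to Plancherel on the Fourier expansion. Property (4) follows from
\begin{equation*}
\int f_{T_1}\overline{f_{T_2}} = c_{\theta,y_1}\overline{c_{\theta,y_2}} \int \phi_\theta(\xi)^2 e^{2\pi i (y_1-y_2)\cdot \xi}\, d\xi,
\end{equation*}
together with $|y_1 - y_2| \gtrsim R^{1/2+\delta}$ forced by the disjointness of $T_1, T_2$ in $\T(\theta)$; integration by parts against the smooth cutoff $\phi_\theta^2$ (whose derivatives cost $R^{1/2}$) gains a factor of $R^{-\delta}$ per iteration, giving the required rapid decay after using Parseval and the inequality $2|c_{\theta,y_1} c_{\theta,y_2}| \leq |c_{\theta,y_1}|^2 + |c_{\theta,y_2}|^2$ to control the Fourier coefficients by $\int_\theta |f|^2$.

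The main work is property (2). The modulation identity gives
\begin{equation*}
Ef_T(x_1,x_2,x_3) = c_{\theta,y}\, E\phi_\theta(x_1+y_1, x_2+y_2, x_3),
\end{equation*}
so it suffices to show $E\phi_\theta$ decays rapidly outside the $R^{1/2+\delta}$-neighborhood of the line through the origin parallel to $v_\theta$. On $\mathrm{supp}(\phi_\theta)$ the $\xi$-gradient of the phase $\xi_1 x_1 + \xi_2 x_2 + |\xi|^2 x_3$ equals $(x_1, x_2) + 2 x_3 w_\theta + O(R^{-1/2}|x_3|)$; for $x \in B_R$ at distance $\gg R^{1/2+\delta}$ from that line this gradient has magnitude $\gg R^{1/2+\delta}$, which exceeds the natural $R^{1/2}$-scale of $\nabla \phi_\theta$ by a factor of $R^{\delta}$. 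Non-stationary phase integration by parts then yields rapid decay, each iteration gaining a factor of $R^{-\delta}$. Shifting by $y$ places this neighborhood around $T$, completing (2). The only delicate step is the bookkeeping that converts the $R^\delta$ slack in the tube radius into true rapid decay, but this is the standard oscillatory-integral argument and presents no essential difficulty.
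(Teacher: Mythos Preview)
Your construction is the standard one the paper defers to (it gives no proof of its own, citing Proposition~2.6 of \cite{MR3454378}). One small slip: with $f_T := c_{\theta,y}\,\phi_\theta\,e^{2\pi i y\cdot\xi}$ and $\sum_\theta \phi_\theta \equiv 1$, summing the $f_T$ yields $\sum_\theta \phi_\theta^2 f$ rather than $f$, so property~(3) fails as written. The usual remedy is to multiply by a second bump $\widetilde\phi_\theta$ supported in $3\theta$ and identically $1$ on $\mathrm{supp}(\phi_\theta)$ (so $f_T := c_{\theta,y}\,\widetilde\phi_\theta\,e^{2\pi i y\cdot\xi}$), or equivalently to arrange $\sum_\theta \phi_\theta^2 \equiv 1$ from the outset; with this correction your verification of properties (1)--(5) goes through unchanged. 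Also note that the dual lattice of a box of side $\sim R^{-1/2}$ lives at scale $R^{1/2}$, not $R^{1/2+\delta}$; the $R^\delta$ enters only through the tube radius, and disjointness of $T_1,T_2\in\T(\theta)$ then forces $|y_1-y_2|\gtrsim R^{1/2+\delta}$ exactly as you use in (4).
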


This is the formulation of the wave packet decomposition in {\cite{MR3454378}}. We refer to Proposition 2.6 of \cite{MR3454378} for the proof; see Lemma 4.1 of \cite{MR2033842} and Lemma 2.2 of \cite{MR2218987} for another formulation. The functions $Ef_T$ are called \textit{wave packets}.
We need some $L^2$-orthogonality of wave packets. Here is one version of it. We refer to Lemma 2.7 and 2.8 of \cite{MR3454378} for the proof.

\begin{lem}[$L^2$-orthogonality]\label{l2orthogonality}
For any subset $\T_i \subset \T$, square $\theta \in \mathcal{P}(R^{-1/2})$ and function $f$, it holds that
\begin{equation}
\begin{split}
    \int_{3\theta}\Big|\sum_{T \in \T_i} f_T\Big|^2 &\lesssim \sum_{T \in \T_i}\int_{10\theta} |f_T|^2+\mathrm{RapDec}(R)\|f\|_{L^2(10\theta)}  
    \\&
    \lesssim \int_{20\theta}|f|^2
\end{split}
\end{equation}
and
\begin{equation}
    \int_{[-1,1]^2}\Big|\sum_{T \in \T_i} f_T\Big|^2 \lesssim \int_{[-1,1]^2}|f|^2.
\end{equation}
\end{lem}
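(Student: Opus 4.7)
The plan is to reduce the estimate to a single frequency square by exploiting the frequency-space support of the wave packets (property (1)), and then to get the diagonal $L^2$ bound by combining the near-orthogonality property (4) with the bounded multiplicity of the tube cover of $B_R$.

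First I would decompose $\T_i = \bigsqcup_{\theta'} (\T_i \cap \T(\theta'))$ according to the frequency square. By property (1), $\sum_{T \in \T_i \cap \T(\theta')} f_T$ is supported in $3\theta'$, so only those $\theta'$ with $3\theta' \cap 3\theta \neq \emptyset$ contribute to the integral over $3\theta$; call this finite set $\Theta$, and observe that $|\Theta| = O(1)$, $\bigcup_{\theta' \in \Theta} 3\theta' \subset 10\theta$, and $\bigcup_{\theta' \in \Theta} \theta' \subset 20\theta$. Applying Cauchy--Schwarz to the $O(1)$ sum in $\theta'$ gives
\begin{equation*}
\int_{3\theta}\Big|\sum_{T \in \T_i} f_T\Big|^2 \lesssim \sum_{\theta' \in \Theta} \int \Big|\sum_{T \in \T_i \cap \T(\theta')} f_T\Big|^2,
\end{equation*}
so it suffices to estimate each summand.

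For fixed $\theta'$, I would expand the square and split pairs $(T_1, T_2)$ in $\T_i \cap \T(\theta')$ according to whether $T_1 \cap T_2 = \emptyset$. For disjoint pairs, property (4) bounds $|\int f_{T_1} \bar{f}_{T_2}|$ by $\mathrm{RapDec}(R) \int_{\theta'} |f|^2$; since $|\T(\theta')| \lesssim R^{O(1)}$, summing over all such pairs still gives a $\mathrm{RapDec}(R) \|f\|_{L^2(\theta')}^2$ contribution. For pairs with $T_1 \cap T_2 \neq \emptyset$, Cauchy--Schwarz gives $|\int f_{T_1} \bar{f}_{T_2}| \le \tfrac{1}{2}(\|f_{T_1}\|_2^2 + \|f_{T_2}\|_2^2)$, and since the tubes in $\T(\theta')$ are parallel and cover $B_R$ with bounded multiplicity, each $T$ intersects $O(1)$ others in $\T(\theta')$, so
\begin{equation*}
\sum_{\substack{T_1,T_2 \in \T_i \cap \T(\theta') \\ T_1 \cap T_2 \neq \emptyset}} \Big|\int f_{T_1}\bar{f}_{T_2}\Big| \lesssim \sum_{T \in \T_i \cap \T(\theta')} \int |f_T|^2.
\end{equation*}
Using $\mathrm{supp}(f_T) \subset 3\theta' \subset 10\theta$ and summing over $\theta' \in \Theta$ yields the first stated inequality, and then property (5) applied to each $\theta' \in \Theta$ together with $\theta' \subset 20\theta$ yields the second.

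For the global estimate, property (1) implies $\sum_{T \in \T(\theta)} f_T$ is supported in $3\theta$, and the squares $\{3\theta\}_{\theta}$ have $O(1)$ overlap, so pointwise
\begin{equation*}
\Big|\sum_T f_T\Big|^2 = \Big|\sum_\theta \sum_{T \in \T(\theta)} f_T\Big|^2 \lesssim \sum_\theta \Big|\sum_{T \in \T(\theta)} f_T\Big|^2;
\end{equation*}
integrating and applying the local bound just established, then using the bounded overlap of $\{20\theta\}_\theta$ to sum the right-hand side, gives $\lesssim \int |f|^2$. The main technical point is handling the off-diagonal term: making sure the pairs of disjoint tubes produce only a $\mathrm{RapDec}(R)$ error after summing over $R^{O(1)}$ pairs, and that the overlapping pairs are controlled purely by diagonal $\|f_T\|_2^2$ via the bounded multiplicity of the tube cover.
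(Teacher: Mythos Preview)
Your argument is correct and is essentially the standard proof of this lemma: the paper itself does not give a proof but refers to Lemmas~2.7 and~2.8 of \cite{MR3454378}, and the argument there is exactly what you wrote---localize to a single $\theta'$ using property~(1), expand the square, control disjoint pairs by property~(4) (summing the $\mathrm{RapDec}(R)$ errors over the $R^{O(1)}$ pairs), and control overlapping pairs diagonally using the bounded multiplicity of the tube cover, then invoke property~(5) for the second inequality.
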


Our proof of Theorem \ref{bilinearrestriction} relies on  polynomial partitioning. The interested readers should consult the introduction of \cite{MR3454378} for the historical backgrounds on  polynomial partitioning. 

We first introduce some terminology. For every polynomial $P:\R^n \rightarrow \R$, we denote by $\mc{Z}(P)$ the zero set of the polynomial $P$, and by $\mathrm{cell}(P)$ the collection of the connected components of $\R^n \setminus \mc{Z}(P)$. A set $\mc{Z}(P_1,\ldots,P_{n-m}):=\bigcap_{i=1}^{n-m} \mc{Z}(P_i)$ is called an $m$-dimensional transverse complete intersection if it satisfies
\begin{equation}
\bigwedge_{j=1}^{n-m}
    \nabla P_j(z) \neq 0
\end{equation}
for all $z \in \mc{Z}(P_1,\ldots,P_{n-m})$. A degree of the transverse complete intersection $\mc{Z}(P_1,\ldots,P_{n-m})$ is defined as the maximum of the degrees of $P_i$. This definition of the degree is non-standard in the sense that the set depends on the choice of polynomials defining the variety. It might be possible to define the degree in a more natural way, but this definition does not make a trouble in our application, so we use this definition.

The following is the polynomial partitioning lemma used in \cite{guth2018} (see also Section 6 of \cite{HR2019}).

\begin{lem}[Polynomial partitioning lemma]\label{polypartitioning} Let $1 \leq m \leq n$ and $d \geq 0$.
Let $F$ be a non-negative $L^1$ function on $\R^n$  supported on $B_{R} \cap N_{R^{1/2+\delta}}(\mc{Z})$ where $\mc{Z}:=\mc{Z}(P_1,\ldots,P_{n-m})$ is an $m$-dimensional transverse complete intersection of degree at most $d$. Then at least one of the following holds:  
\begin{enumerate}
    \item 
    There exists a polynomial $P:\R^n \rightarrow \R$ of degree at most $O(d)$ with the following properties:
    \begin{itemize}
        \item $\# \mathrm{cell}(P) \simeq d^m$.
        \item For every $O' \in \mathrm{cell}(P)$, we define the cells $O:=O' \setminus N_{R^{1/2+\delta}}(\mc{Z}(P))$. Then there exists a subcollection $\mathrm{cell}_{\circ}(P)$ of $\mathrm{cell}(P)$ such that for every $O$ generated by $O' \in \mathrm{cell}_{\circ}(P)$
        \begin{equation}
           \int_{\R^n}F \simeq d^m \int_{O} F.
        \end{equation}
        Moreover, the number of the cells $O$ generated by $\mathrm{cell}_{\circ}(P)$ is comparable to $d^m$. 
    \end{itemize}
        \item There exists an $(m-1)$-dimensional transverse complete intersection $\mc{Z}_1$ of degree at most $d$ such that
        \begin{equation}
            \int_{\R^n }F
            \lesssim
            \int_{B_R \cap N_{R^{1/2+\delta}}(\mc{Z}_1)}F.
        \end{equation}
\end{enumerate}
\end{lem}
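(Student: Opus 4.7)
The plan is to run a polynomial partitioning \emph{on the variety} $\mc{Z}$ itself, in the spirit of Guth's polynomial partitioning on varieties, and then split into a cellular case and an algebraic case. First, I would apply the Stone--Tukey ham-sandwich theorem iteratively to the absolutely continuous measure $F\,dx$ restricted to $B_R\cap \mc{Z}$, producing a polynomial $P:\R^n\to\R$ of degree at most $d$ whose zero set equipartitions $\int_{\mc{Z}}F$ among the connected components of $\R^n\setminus\mc{Z}(P)$. Because $\mc{Z}$ is an $m$-dimensional transverse complete intersection cut out by polynomials of degree at most $d$, a Bezout/Milnor--Thom count bounds the number of cells of $\mathrm{cell}(P)$ that meet $\mc{Z}$ by $\lesssim d^m$; the equipartition then forces this number to be $\simeq d^m$, with each relevant cell $O'$ carrying mass $\simeq d^{-m}\int F$.

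Next, I would invoke the dichotomy
\[
\int_{\R^n} F \;\simeq\; \sum_{O'\in\mathrm{cell}(P)} \int_{O} F \;+\; \int_{N_{R^{1/2+\delta}}(\mc{Z}(P))} F,
\]
with $O := O'\setminus N_{R^{1/2+\delta}}(\mc{Z}(P))$. If the first sum dominates, conclusion (1) is immediate from the equipartition and the bound $\#\mathrm{cell}(P)\simeq d^m$ on the cells meeting $\mc{Z}$. Otherwise the bulk of $F$ is concentrated in $N_{R^{1/2+\delta}}(\mc{Z})\cap N_{R^{1/2+\delta}}(\mc{Z}(P))$, which (up to an absolute constant) lies in $N_{R^{1/2+\delta}}(\mc{Z}\cap\mc{Z}(P))$. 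Setting $\mc{Z}_1:=\mc{Z}(P_1,\ldots,P_{n-m},P)$ gives an intersection of codimension $n-m+1$, hence dimension $m-1$, and of degree $\max(d,\deg P_j)\le d$, yielding conclusion (2).

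The main obstacle is ensuring that $\mc{Z}_1$ is an \emph{honest} transverse complete intersection, i.e.\ that $\bigwedge_{j=1}^{n-m}\nabla P_j \wedge \nabla P \ne 0$ everywhere on $\mc{Z}_1$. This calls for a Sard/Bertini-type generic-perturbation argument: replace $P$ by $P+\eta Q$ for a small generic polynomial $Q$ of degree $\le d$, chosen so that transversality holds while the degree bound is preserved. One must then verify that $\eta$ can be taken small enough that $N_{R^{1/2+\delta}}(\mc{Z}(P))$ is contained in $N_{CR^{1/2+\delta}}(\mc{Z}(P+\eta Q))$ for some absolute constant $C$, so the mass concentration of $F$ transfers to $N_{R^{1/2+\delta}}(\mc{Z}_1)$ without loss. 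Once this transversality step is secured, the remaining bookkeeping (matching implicit constants in $\#\mathrm{cell}(P)\simeq d^m$, identifying the cells with nontrivial $F$-mass, and absorbing the constant $C$ into $\delta$) is routine.
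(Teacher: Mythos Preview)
The paper does not supply a proof of this lemma; it is quoted from \cite{guth2018} (with a pointer also to Section~6 of \cite{HR2019}). Your outline follows the standard skeleton of those proofs, but two of your steps are genuine gaps rather than routine bookkeeping.

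First, the containment $N_{R^{1/2+\delta}}(\mc{Z})\cap N_{R^{1/2+\delta}}(\mc{Z}(P))\subset N_{CR^{1/2+\delta}}(\mc{Z}\cap\mc{Z}(P))$ is false in general. For a concrete obstruction, let $\mc{Z}$ and $\mc{Z}(P)$ be two hyperplanes meeting at angle $\theta\ll 1$: the intersection of their $\rho$-neighborhoods has diameter $\sim\rho/\theta$ in the direction along $\mc{Z}\cap\mc{Z}(P)^{\perp}$, so no constant $C$ independent of $\theta$ works. Your Sard-type perturbation $P\mapsto P+\eta Q$ makes $\mc{Z}_1$ a transverse complete intersection in the qualitative sense (nonvanishing of $\bigwedge\nabla P_j\wedge\nabla P$), but gives no uniform lower bound on the angle between $T_z\mc{Z}$ and $T_z\mc{Z}(P)$; the constant $C$ you need may blow up with $R$. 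In Guth's argument one instead takes $P=Q\circ\pi$ for a \emph{generic linear projection} $\pi:\R^n\to\R^m$ and a degree-$d$ polynomial $Q$ on $\R^m$; then $\mc{Z}(P)$ is a cylinder, and the locus where $\mc{Z}$ is nearly tangent to this cylinder is controlled separately and itself contributes an $(m-1)$-dimensional transverse complete intersection to case~(2).

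Second, the equipartition step as written does not produce $\sim d^m$ cells. The measure $F\,dx$ restricted to $\mc{Z}$ is identically zero (Lebesgue-null set), so presumably you mean to equipartition $F\,dx$ on $\R^n$; but iterated Stone--Tukey in $\R^n$ up to degree $d$ yields $\sim d^n$ equal-mass sign cells, and your Milnor--Thom count only bounds the cells meeting $\mc{Z}$, not those meeting the thickened support $N_{R^{1/2+\delta}}(\mc{Z})$. In fact one can check that after removing the wall any shrunken cell $O$ carrying $F$-mass forces its parent $O'$ to meet $\mc{Z}$ (the nearest point of $\mc{Z}$ to $x\in O$ lies in $O'$ since the connecting segment cannot cross $\mc{Z}(P)$), so at most $\lesssim d^m$ shrunken cells survive---but each carries mass $\le d^{-n}\int F$, hence for $m<n$ the cellular contribution is $\lesssim d^{m-n}\int F$ and alternative~(1) is never reached by this route. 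The projection construction fixes this as well: a pullback $Q\circ\pi$ of degree $d$ has exactly $\sim d^m$ cylindrical cells in $\R^n$, and equipartitioning $F\,dx$ reduces to equipartitioning $\pi_*(F\,dx)$ on $\R^m$.
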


Since we use the polynomial method and the wave packet decomposition, it is necessary to understand the interplay between a variety and tubes. In particular, we need to answer two questions:
\begin{itemize}
    \item Describe the intersection of a tube and a thin neighborhood of a variety.
    \item How many tubes pointing in  ``separated'' directions can be contained in a thin neighborhood of a variety?
\end{itemize}
The author learned the answer of the first question in \cite{MR4205111}, where Zahl uses a result of Basu, Pollack, and Roy in \cite{MR1401711} and obtains a satisfactory answer (see \eqref{zahlobservation}). The second question is answered by \cite{MR3454378} in three dimensions, by \cite{MR3820441} in four dimensions, and by \cite{MR3881832} in all the dimensions, whose results are called the polynomial Wolff axioms. We will use Lemma \ref{polylemma}, which is a slightly more general version of them.

\begin{defi}[Semi-algebraic set]
A set $S \subset \R^n$ is called semi-algebraic if it can be written as a finite union of sets of the form
\begin{equation}
    \{x \in \R^n: P_1(x)>0, \ldots, P_k(x)>0,P_{k+1}(x)=\ldots=P_{k+l}(x)=0   \},
\end{equation}
where $P_1,\ldots,P_{k+l}$ are polynomials. A union of such sets is called a presentation of $S$. The complexity of a presentation is the sum of the degrees of the polynomials. The complexity of a semi-algebraic set $S$ is the minimum complexity of its presentation.
\end{defi}

\begin{lem}[\cite{MR1401711}, cf. Theorem 2.3 of \cite{MR4205111}]\label{bpr96}
Let $S \subset \R^n$ be a semi-algebraic set of complexity $D$. Then there exists a constant $C(n,D)$ so that $S$ has at most $C(n,D)$ connected components.
\end{lem}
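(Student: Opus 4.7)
The plan is to reduce the statement to the Oleinik--Petrovsky--Milnor--Thom bound for real algebraic varieties, and then handle the general semi-algebraic case via a perturbation argument on sign conditions. First I would decompose $S$ as a union $S = \bigcup_{\alpha=1}^{N} S_\alpha$, where each $S_\alpha$ is a basic semi-algebraic set defined by finitely many strict inequalities $P_{\alpha,i} > 0$ and equations $Q_{\alpha,j} = 0$. The complexity bound $D$ guarantees a presentation in which $N$, the total number of polynomials, and their individual degrees are all controlled by $D$. Since the number of connected components of a union is at most the sum of the counts on its pieces, it suffices to prove a bound of the form $C(n,D)$ for each individual $S_\alpha$.

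Next, for a fixed basic piece, I would produce a compact semi-algebraic approximation. Replace each strict inequality $P_{\alpha,i} > 0$ by $P_{\alpha,i} \geq \varepsilon$ and each equation $Q_{\alpha,j} = 0$ by $Q_{\alpha,j}^2 \leq \varepsilon^2$, and intersect with a large ball $\{|x|^2 \leq M\}$. For small generic $\varepsilon$ and large $M$, every connected component of $S_\alpha$ contributes at least one component to the perturbed closed set, so it suffices to bound the latter. The boundary of this set lies in a real algebraic variety whose defining polynomials have degree bounded linearly in $D$. I would then invoke the Oleinik--Petrovsky--Milnor--Thom bound: a real variety in $\R^n$ cut out by polynomials of total degree at most $d$ has at most $(O(d))^n$ connected components. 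Combining a single polynomial out of the defining ones by a sum-of-squares trick and applying this bound yields the estimate $C(n,D)$ on the number of components of the perturbed set, and hence of $S_\alpha$.

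The main obstacle is arranging the perturbation so that no two components of $S_\alpha$ get merged and the degrees of the polynomials defining the approximation remain bounded in terms of $D$; one also has to argue carefully that letting $\varepsilon \to 0$ does not lose a component of $S_\alpha$. The cleanest way to handle both issues is the critical-point argument of Basu--Pollack--Roy, which works directly with the sign conditions of the original polynomials and counts components by Morse-theoretic means on a perturbed polynomial built from the $P_{\alpha,i}$ and $Q_{\alpha,j}$; this is precisely the content of \cite{MR1401711}, and I would ultimately defer to their argument for the technical bookkeeping.
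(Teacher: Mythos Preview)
The paper does not give a proof of this lemma at all: it is stated as a citation to \cite{MR1401711} (Basu--Pollack--Roy), with a cross-reference to Theorem~2.3 of \cite{MR4205111}, and is used as a black box. So there is no ``paper's own proof'' to compare against. Your sketch is a reasonable outline of the classical route---reduce to basic pieces, pass to a closed perturbation, and invoke an Oleinik--Petrovsky--Milnor--Thom type bound---and you yourself acknowledge that the delicate step (ensuring components are neither merged nor lost under perturbation, with degrees controlled) is exactly what \cite{MR1401711} handles. In effect you end up deferring to the same reference the paper cites, which is entirely appropriate here.
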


\begin{lem}[Lemma 2.11 of \cite{MR4205111}, cf. Theorem 1.4 of \cite{hickman2019improved}]\label{polylemma}
Let $n,E$ and $K$ be integers with $n \geq 2$, and let $\epsilon>0$. Then there is a constant $C(n,E,K,\epsilon)>0$ so that the following holds. Let $Z$ be a semi-algebraic set of complexity at most $E$. Let $r>0$. Suppose that $Z \subset \R^n $ has diameter $r$ and obeys 
\begin{equation}\label{wongkew}
    |N_{\rho}(Z)\cap B(x,r)| \leq E\rho r^{n-1} \text{ for all balls } B(x,r).
\end{equation}
Let $0<\delta< \rho/r$, and let $\mathcal{L}$ be a set of lines in $\R^n$ pointing in $\delta$-separated directions with the property that for each $L \in \mathcal{L}$ 
\begin{center}
    $L \cap N_{\rho}Z$ contains a line segment of length $r/K$.
\end{center}
Then 
\begin{equation}
    \# \mathcal{L} \leq C(n,E,K,\epsilon)
    \big(\frac{r}{\rho}\big)^{-1+\epsilon}\delta^{1-n-\epsilon}.
\end{equation}
\end{lem}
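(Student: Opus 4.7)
The plan is to prove Lemma~\ref{polylemma} by induction on $d$. The base case $d=1$ reduces to the polynomial Wolff axioms of Guth--Katz--Zahl adapted to semi-algebraic sets, and the inductive step combines the base case at the coarsest scale $r_1$ with the inductive hypothesis applied inside a small ball containing $S_2$.

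\textbf{Base case.} For $d=1$ the target bound is $\#\mathcal{L} \leq C (r_1/\rho_1)^\epsilon \delta^{1-n-\epsilon}(\rho_1/r_1)$. I would run an iterated polynomial partitioning on $\mathbb{R}^n$: at each step, select a partitioning polynomial $P$ of controlled degree, split the lines of $\mathcal{L}$ into those concentrated in a thin neighborhood of $\mc{Z}(P)$ and those transverse to $\mc{Z}(P)$, and recurse on the former; on the latter, Lemma~\ref{bpr96} controls the number of connected components of the intersection of $S_1$ with each cell, so that each line contributes to only boundedly many cells. Iterating down to cell scale $\sim \rho_1$ and invoking the Wongkew volume bound $|N_{\rho_1}(S_1)\cap B(x,r)|\leq E\rho_1 r^{n-1}$ at the bottom of the recursion yields the claim.

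\textbf{Inductive step.} Assume the lemma for $d-1$. Because $\mathrm{diam}(S_2)\leq r_2$, the entire nested chain $N_{\rho_2}(S_2)\supset\cdots\supset N_{\rho_d}(S_d)$ is contained in a single ball $B_*$ of radius $O(r_2)$, and every line $L\in\mathcal{L}$ carries its $r_2/K$-long $S_2$-segment inside $B_*$. Applied to the restricted sets $S_2\cap 2B_*,\ldots,S_d\cap 2B_*$, which inherit the Wongkew hypotheses with complexity $O(E)$, the inductive hypothesis produces a bound carrying the factor $\prod_{i=2}^d \rho_i/r_i$. Separately, the base case applied to $S_1$ alone at scale $r_1$ produces a bound carrying the factor $\rho_1/r_1$. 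The two bounds constrain the same $\delta$-direction set but reflect geometrically orthogonal tangency constraints: tangency to $S_1$ at scale $r_1$ versus tangency to the $S_i$'s inside $B_*$. A pigeonhole over tangent hyperplanes of $S_1$ at the intersection point $L\cap N_{\rho_1}(S_1)$ decouples these constraints, and their savings multiply to give the product $\prod_{i=1}^d\rho_i/r_i$.

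\textbf{Main obstacle.} The core difficulty is making the coarse- and fine-scale tangency constraints genuinely independent so that their codimension-$1$ savings accumulate multiplicatively; a naive union of the two upper bounds only yields the minimum, which misses the product structure. This forces a direction-space decomposition at each scale that isolates the tangent direction of $S_1$ (giving one codimension of savings) from the tangent directions of the smaller $S_i$ inside $B_*$ (giving the remaining codimensions). Managing the $\epsilon$-loss across $d$ iterations then forces the refinement $\epsilon\to\epsilon/d$ in the inductive parameter, accounting for the explicit blow-up of $C(n,E,K,\epsilon)$ in $\epsilon$ and the final slack factor $(r_1/\rho_d)^\epsilon$. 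Throughout, Lemma~\ref{bpr96} is reinvoked at every scale to keep the semi-algebraic complexity of the intersected sets uniformly bounded.
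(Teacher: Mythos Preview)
The paper does not prove this lemma at all; it is quoted as Lemma~2.11 of \cite{MR4205111} and used as a black box. So there is no in-paper proof to compare against, and your sketch must be judged on its own.

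Your base case $d=1$ is the standard polynomial Wolff axiom for a single semi-algebraic set, and the partitioning outline you give is the usual route there. The gap is in the inductive step. You correctly isolate the crux: the bound from $S_1$ alone contributes $\delta^{1-n}\rho_1/r_1$, the inductive bound from $S_2,\dots,S_d$ contributes $\delta^{1-n}\prod_{i\ge 2}\rho_i/r_i$, and one needs the \emph{product}, not the minimum. But the proposed fix---``pigeonhole over tangent hyperplanes of $S_1$ at the intersection point''---does not supply a mechanism that makes these savings independent. The tangent hyperplane of $S_1$ varies as the tangency point ranges over $S_1$, so pigeonholing on it introduces a count (the number of distinct hyperplanes at resolution $\rho_1/r_1$) that has to be paid for, and you give no argument that this count is offset by a matching direction restriction inside each pigeonhole class. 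More fundamentally, the constraints are nested rather than orthogonal: since $S_2\subset S_1$, tangency to $S_2$ at scale $(r_2,\rho_2)$ already forces a form of tangency to $S_1$, so the ``geometrically orthogonal'' description is not justified.

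What is actually needed is an argument in which each scale $i$ contributes its factor $\rho_i/r_i$ through a single pass, not through combining two separately obtained global bounds. One workable shape: localize all lines to the ball $B_*$ of radius $O(r_2)$, then use the $S_1$-tangency to show that the \emph{direction set} of $\mathcal{L}$ is itself confined to a semi-algebraic subset of $\mathbb{S}^{n-1}$ satisfying a Wongkew-type bound with parameter $\rho_1/r_1$ (this is where the $d=1$ machinery enters, but applied in direction space rather than as a cardinality bound on $\mathcal{L}$); then run the inductive hypothesis with this restricted direction set in place of the full sphere. Your sketch gestures at something like this but does not carry it out, and without that step the induction does not close.
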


\section{A proof of  theorem \ref{bilinearrestriction}: polynomial partitioning}

Theorem \ref{bilinearrestriction} can be deduced from the following:
\begin{prop}\label{reducedmainestimate}
For every $\epsilon>0$, it holds that
\begin{equation}\label{prop31}
\begin{split}
&\Big\|\avprod_{i=1}^{2}|Ef_i|\Big\|_{L^{13/4}(B_R)} 
\\&
\leq C_{\epsilon}R^{10\epsilon} \big(\avprod_{i=1}^{2}\|f_i\|_{L^2}\big)^{\frac{12}{13}+\epsilon}
\big(\avprod_{i=1}^{2}\max_{\theta \in \mathcal{P}(R^{-1/2}) }\|f_i\|_{L^{2}_{\mathrm{avg}}(\theta)}\big)^{\frac{1}{13}-\epsilon}
\end{split}
\end{equation}
for every $R \geq 1$ and separated functions $f_1$ and $f_2$.
\end{prop}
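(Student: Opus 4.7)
The plan is to prove Proposition \ref{reducedmainestimate} by induction on the radius $R$, following the broad-cell/algebraic partitioning framework of Guth \cite{MR3454378} as adapted to the bilinear setting in \cite{bejenaru2020optimal}. The induction hypothesis is the statement \eqref{prop31} itself at all scales strictly smaller than $R$; the constant $C_\epsilon$ and the factor $R^{10\epsilon}$ are what give the induction slack needed to absorb various polynomial-in-$R^\delta$ losses.

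First I would apply the wave packet decomposition (Lemma \ref{wpd}) to both $f_1$ and $f_2$ at scale $R$, producing $Ef_i = \sum_{T \in \T_i} Ef_{i,T}$ modulo $\mathrm{RapDec}(R)\|f_i\|_2$, and then apply the pigeonhole reduction to assume $\|f_{i,T}\|_{L^2}$ is essentially constant across the relevant tubes and that $\max_\theta \|f_i\|_{L^2_{\mathrm{avg}}(\theta)}$ is attained on a single fixed subcollection of squares. Next I would apply the polynomial partitioning lemma (Lemma \ref{polypartitioning}) with $m=n=3$ to the non-negative function $F:=|Ef_1Ef_2|^{q/2}$ on $B_R$, where $q=13/4$. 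Choosing the degree $d=R^{\delta'}$ for some $\delta'\ll\epsilon$ yields a dichotomy: either a \emph{cellular} outcome, in which $\int F$ is evenly spread across $\sim d^3$ cells, or an \emph{algebraic} outcome, in which the mass concentrates on $N_{R^{1/2+\delta}}(\mc{Z})$ for a two-dimensional transverse complete intersection $\mc{Z}$ of degree at most $d$.

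In the cellular case, each wave packet $Ef_{i,T}$ meets only $O(d^2)$ cells; applying the induction hypothesis cell by cell at a slightly smaller scale and Cauchy--Schwarz in $T$ closes the induction precisely because the exponent $q = 13/4$ is what balances the cell count $d^3$ against the tube-count $d^2$. In the algebraic case, I would pigeonhole the tubes of $\T_2$ according to the dyadic angle $\alpha\in [R^{-1/2},1]$ between $v(T)$ and the tangent plane of $\mc{Z}$ at the nearest point of $\mc{Z}$, and treat each $\alpha$ separately. When $\alpha\lesssim R^{-1/2+\delta}$ the wave packets of both $f_1$ and $f_2$ lie in $N_{R^{1/2+\delta}}(\mc{Z})$, so the lower-dimensional argument of Guth applies directly. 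For intermediate $\alpha$, the intersection of each $T\in \T_2$ with $N_{R^{1/2+\delta}}(\mc{Z})$ is a segment of length $\sim R^{1/2+\delta}/\alpha$, and combined with the $L^2$-orthogonality of Lemma \ref{l2orthogonality} this gives an on-variety $L^2$-gain of a factor $\alpha^{-1/2}$ over the trivial bound (the estimate alluded to as Lemma \ref{L2estimate} in the introduction), in which step the polynomial Wolff axiom Lemma \ref{polylemma} is used to count the tubes. When $\alpha\simeq 1$ one is in the ``high angle dominant case'' of Subsection \ref{highangle}, where I would reapply Lemma \ref{polypartitioning} one more time inside the neighborhood of $\mc{Z}$ following \cite{bejenaru2020optimal}, descending either to a one-dimensional transverse complete intersection or to a further cellular decomposition.

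The main obstacle will be the angle-balancing step in the algebraic case. In Guth's single-function argument all significant wave packets are automatically near the variety, and the $q=13/4$ balance is built into the cell-tube trade-off. In the bilinear setting the wave packets of $f_2$ have no reason to lie near $\mc{Z}$, so an additional angular parameter $\alpha$ is forced into the analysis; to extract the exponent $\tfrac{1}{13}$ on the $\max_\theta\|f_i\|_{L^2_{\mathrm{avg}}(\theta)}$ factor one must interpolate the lower-dimensional restriction estimate (best when $\alpha$ is small) against the on-variety $L^2$-gain (best when $\alpha$ is large), and then verify that the two bounds match at the critical angle so that the resulting estimate is uniform in $\alpha$. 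Ensuring that this matching is compatible with the induction-on-scales bookkeeping, and in particular that the $R^{10\epsilon}$ slack is never consumed, is the delicate point.
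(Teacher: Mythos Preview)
Your architecture is right and matches the paper's: induction on $R$, polynomial partitioning on $|Ef_1Ef_2|^{q/2}$, cellular case closed by the $L^2$ gain plus induction, algebraic case handled by an angle dichotomy with a second partitioning in the high-angle regime. A few points where your sketch drifts from what the paper actually does, and one genuine slip:

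\textbf{Tube--cell count.} In $\mathbb{R}^3$ each tube meets at most $D+1$ cells, not $O(D^2)$; this is exactly what makes the cellular numerology close at $q=13/4$ (one gets $D^{3/q}\cdot D^{-(12/13+\epsilon)}=D^{-\epsilon}$). With $O(D^2)$ the induction would not close.

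\textbf{Degree choice.} The paper takes $D$ to be a large \emph{constant} independent of $R$, not $R^{\delta'}$. This matters for the transverse-wall step, where the loss $D^{O(1)}$ from the number of transverse crossings (Lemma~5.7 of \cite{guth2018}) must be beaten by the $R^{-c\delta\epsilon}$ slack.

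\textbf{Angle parameter.} Pigeonholing $\T_2$ by ``the angle $\alpha$ to the tangent plane of $\mathcal{Z}$ at the nearest point'' is not well-defined, since the tangent plane varies along $\mathcal{Z}$. The paper first passes to \emph{regular balls}, where the tangent plane is essentially a fixed $2$-plane $V$, and then splits only once at a fixed threshold $4\gamma$ (not dyadically in $\alpha$). In the low-angle/tangential case the dyadic parameter that actually appears is the \emph{length} $l$ of the sub-tube $\widetilde{T}_T\subset T\cap N_{R^{1/2+\delta}}(\mathcal{Z})$, obtained via the semialgebraic observation \eqref{zahlobservation}; the $L^2$ gain in Lemma~\ref{L2estimate} is the factor $l/R$, which is better for shorter $l$ (larger effective angle), not $\alpha^{-1/2}$.

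\textbf{What is interpolated.} In the tangential wall case the paper interpolates an $L^2$ bound (Lemma~\ref{L2estimate}) against an $L^4$ bilinear bound obtained from separation after restricting to the tangent plane (the argument around \eqref{l4claim}), and then uses the polynomial Wolff axiom (Lemma~\ref{polylemma}) to convert tube counts into $\max_\theta\|f_i\|_{L^2_{\mathrm{avg}}(\theta)}$. It is this $L^2$--$L^4$ interpolation, not an interpolation with a ``lower-dimensional restriction estimate'', that produces the $\tfrac{12}{13}$/$\tfrac{1}{13}$ split.
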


Let us assume the above proposition and prove Theorem \ref{bilinearrestriction}.
Recall that Tao \cite{MR2033842} proved a sharp bilinear restriction estimate for $(p,q)=(2,10/3+\epsilon)$ for arbitrary $\epsilon>0$. By the trivial estimate \eqref{trivialbound}, the average norm in \eqref{prop31} can be bounded by $L^{\infty}$-norm. Applying the resulting inequality with $f_i=\chi_{F_i}$ for separated measurable sets $F_i \subset [-1,1]^2$ we obtain 
\begin{equation}
    \||E\chi_{F_1}E\chi_{F_2}|^{1/2}\|_{L^{13/4}(B_R)} \leq C_{\epsilon} R^{10\epsilon} \avprod_{i=1}^2|F_i|^{6/13+\epsilon/2}.
\end{equation}
By applying H\"{o}lder's inequality and using Tao's result, we obtain
\begin{equation}\label{bilinearinterpolation}
    \||Ef_1Ef_2|^{1/2}\|_{L^q(B_R)} \leq C_{p,q,\epsilon} R^{10\epsilon}\big(\|f_1\|_{L^p}\|f_2\|_{L^p}\big)^{1/2}
\end{equation}
for every pair $(p,q)$ satisfying \eqref{goalpair} and every $f_i=\chi_{F_i}$. We apply Lemma 1.4.20 of Grafakos's book \cite{MR3243734} componentwise to the bilinear operator, and obtain \eqref{bilinearinterpolation} for every pair $(p,q)$ satisfying \eqref{goalpair} and every $f_i \in L^p$. Applying the epsilon-removal lemma (Lemma 2.4 in \cite{MR1748920}) completes the proof of Theorem \ref{bilinearrestriction}.
\\

In the rest of the paper, we focus on the proof of Proposition \ref{reducedmainestimate}.
Our proof relies on the induction on scales. 
Let us fix $\epsilon>0$. We may assume that $\epsilon$ is sufficiently small. We take $C_{\epsilon}$ large enough so that \eqref{prop31} trivially holds true for small $R$. Hence, it suffices to consider a large $R$ and prove \eqref{prop31} under the assumption that it holds true for $<R/2$. We record our induction hypothesis.

\subsection*{Induction hypothesis:}
\begin{equation}\label{inductionhypothesis}
    \eqref{prop31} \text{ holds true all the radii smaller than}  \leq R/2.
\end{equation}

In order to close the induction, it is important to keep in mind that we need to prove \eqref{prop31} with the same constant $C_{\epsilon}$. The constant $C_{\epsilon}$ will not vary from line-to-line.

\subsection{Polynomial partitioning} 
Let $\delta>0$ be some number much smaller than $\epsilon$.
Let $D$ be a sufficiently large number independent of $R$, which will be determined later. 
We apply the polynomial partitioning lemma (Lemma \ref{polypartitioning}) to the function $|Ef_1Ef_2|^{1/2}$ and $\mc{Z}=\R^3$. Then there are two possibilities:

\subsection*{The cellular case}
There exists a polynomial $P$ of degree at most $D$ such that
\begin{equation}
\R^3 \setminus \mc{Z}(P) = \bigsqcup_{k=1}^{M} O_k',
\end{equation}
where $M \simeq D^3$, and $O_k'$ is a connected component of $\R^3 \setminus \mc{Z}(P)$, 
and if we define the cells $O_k:=B_R \cap \big(O_k' \setminus N_{R^{1/2+\delta}}(\mc{Z}(P))\big)$, then
\begin{equation}\label{equalcontribution}
\||Ef_1Ef_2|^{1/2}\|_{L^{13/4}( B_R)}^{13/4} \simeq D^3\||Ef_1Ef_2|^{1/2}\|_{L^{13/4}(O_{k} )}^{13/4}
\end{equation}
for $\simeq D^3$ many cells $O_k$. 

\subsection*{The wall case}
There exists a two-dimensional transverse complete intersection $\mc{Z}(P_1)$ of degree at most $D$ such that
\begin{equation}
    \||Ef_1Ef_2|^{1/2}\|_{L^{13/4}( B_R)} \lesssim \||Ef_1Ef_2|^{1/2}\|_{L^{13/4}(B_R \cap N_{R^{1/2+\delta}}(\mc{Z}(P_1) )) }.
\end{equation}

It is well-known that a transverse complete intersection can be thought of as a smooth manifold locally. This will enable us to define a tangent plane on a point of the transverse complete intersection.

\subsection{The cellular case}\label{cellcase}
In this subsection, we will consider the cellular case and prove \eqref{prop31}.
This case can be dealt with by following the arguments of \cite{MR3454378} line by line. We include the details for the completeness of the paper.
 Recall that $\T$ is a collection of the tubes defined at the beginning Section \ref{prelim}. By abuse the notation, we pretend that cells always indicate the cells $O_k$ satisfying \eqref{equalcontribution}. In this subsection, the constant $C$ may vary from line-to-line. This constant $C$ is independent of the parameters $\epsilon$, $D$ and $R$.

We first note that,
by property (2) and (3) of Lemma \ref{wpd},
on each cell $O_k$
\begin{equation}
    Ef_i=\sum_{T \in \T: T \cap O_k \neq \emptyset }Ef_{i,T}+\mathrm{RapDec}(R)\|f_i\|_2.
\end{equation}
For simplicity, we introduce the notation $f_{i,O_k}:= \sum_{T \in \T: T \cap O_k \neq \emptyset } f_{i,T}$. By the equality above, for every $k$, it holds that
\begin{equation}\label{cell:step2}
\begin{split}
    \||Ef_1Ef_2|^{1/2}\|_{L^{13/4}(B_R)} \lessapprox 
    D^\frac{12}{13}\| |Ef_{1,O_k}Ef_{2,O_k}|^{1/2}\|_{L^{13/4}(O_{k})},
\end{split}
\end{equation}
where the notation $\lessapprox$ is introduced in \eqref{lessapprox}.
Notice that, by the fundamental theorem of algebra, each tube $T \in \T$ passes through at most $D+1$ cells $O_k$  (see Lemma 3.2 of \cite{MR3454378}). Hence, by the orthogonality of wave packets (Lemma \ref{l2orthogonality}), it holds that
\begin{equation}\label{38}
\begin{split}
    \sum_{O_k}\|f_{i,O_k}\|_2^2
    &\leq C\sum_{O_k}\sum_{T_i \in \T: T_i \cap O_k \neq \emptyset }\|f_{i,T_i}\|_2^2
    \\&
    \leq 
    C\sum_{T_i \in \T}\sum_{O_k: O_k \cap T_i \neq \emptyset }\|f_{i,T_i}\|_2^2
    \leq 2CD \|f_i\|_2^2
\end{split}
\end{equation}
for some constant $C$. It is straightforward to see that $\frac{9}{10}\#O_k$ many cells $O_k$ satisfy the following:
\begin{equation}
    \|f_{i,O_k}\|_2^2 \leq  \big(\frac{100}{\# O_k}\big) C D \|f_i\|_2^2.
\end{equation}
Thus, by recalling that $\#O_k \simeq D^3$ and by pigeonhling, we can choose a cell $O_{k_0}$ such that
\begin{equation}\label{goodchoice}
    \|f_{i,O_{k_0}}\|_2^2 \lesssim D^{-2} \|f_i\|_2^2.
\end{equation}
for both $i=1,2$.
Let us fix such $O_{k_0}$ and decompose $O_{k_0}$ into smaller balls of radius at most $R/2$ and apply the induction hypothesis \eqref{inductionhypothesis} to the right hand side of \eqref{cell:step2} on each smaller ball $B_{R/2}$. Then the left hand side of \eqref{cell:step2} is bounded by
\begin{equation}
\begin{split}
CC_{\epsilon}
    D^{\frac{12}{13}} R^{ 10\epsilon } \big(\avprod_{i=1}^{2}\|f_{i,O_{k_0}}\|_{L^2}\big)^{\frac{12}{13}+\epsilon }
\big(\avprod_{i=1}^{2}\max_{\theta}\|f_{i,O_{k_0}}\|_{L^{2}_{\mathrm{avg}}(\theta)}\big)^{\frac{1}{13}-\epsilon }.
\end{split}
\end{equation}
We now apply \eqref{goodchoice} to the $L^2$-norm and the $L^2$-orthogonality (Lemma \ref{l2orthogonality}) to the $L^2_{\mathrm{avg}}$-norm. Then the above term is further bounded by 
\begin{equation}
CC_{\epsilon}
    D^{-\epsilon}R^{ 10\epsilon} \big(\avprod_{i=1}^{2}\|f_i\|_{L^2}\big)^{\frac{12}{13}+\epsilon }
\big(\avprod_{i=1}^{2}\max_{\theta}\|f_i\|_{L^{2}_{\mathrm{avg}}(\theta)}\big)^{\frac{1}{13}-\epsilon}.
\end{equation}
It suffices to take $D$ sufficiently large so that $CD^{-\epsilon} \leq 1$, and thus, we can close the induction.
\\

To summarize, we have proved \eqref{prop31} in the cellular case. It remains to prove \eqref{prop31} in the wall case, which will be done in the following sections.

\section{A proof of  theorem \ref{bilinearrestriction}: The wall case}
In this section, we consider the wall case. We need to prove that under the induction hypothesis \eqref{inductionhypothesis}
\begin{equation}\label{wallcase}
\begin{split}
&\Big\|\avprod_{i=1}^{2}|Ef_i|\Big\|_{L^{13/4}(B_R \cap N_{R^{1/2+\delta}}(\mc{Z}(P_1) ))} 
\\&
\leq CC_{\epsilon}R^{10\epsilon} \big(\avprod_{i=1}^{2}\|f_i\|_{L^2}\big)^{\frac{12}{13}+\epsilon}
\big(\avprod_{i=1}^{2}\max_{\theta \in \mathcal{P}(R^{-1/2}) }\|f_i\|_{L^{2}_{\mathrm{avg}}(\theta)}\big)^{\frac{1}{13}-\epsilon}
\end{split}
\end{equation}
for some small constant $C$. Here, $\mc{Z}(P_1)$ is a two-dimensional transverse complete intersection of degree at most $D$. Recall that the constant $D$ is independent of the parameter $R$, and this fact will play a role in the proof.
\medskip

Let $D_1$ be a large constant compared to $D$ and be independent of the parameter $R$.
We consider two subcases according to whether there exists an one-dimensional transverse complete intersection $\mc{Z}(Q_1,Q_2)$ of degree at most $D_1$ such that
\begin{equation}
\begin{split}
    &\Big\|\avprod_{i=1}^{2}|Ef_i|\Big\|_{L^{13/4}(B_R \cap N_{R^{1/2+\delta}}(\mc{Z}(P_1) ))}  
    \\&
    \lesssim 
    \Big\|\avprod_{i=1}^{2}|Ef_i|\Big\|_{L^{13/4}(B_R \cap N_{R^{1/2+\delta}}(\mc{Z}(Q_1,Q_2)))}. 
\end{split}
\end{equation}
If such $\mc{Z}(Q_1,Q_2)$ exists, then we apply the following lemma and prove \eqref{wallcase}.

\begin{lem}\label{onevariety}
For every pair of separated functions $g_1$ and $g_2$, one-dimensional transverse complete intersection $\mc{Z}(Q_1,Q_2)$ of degree at most $D_1$, it holds that
\begin{equation}
\begin{split}
    &\Big\|\avprod_{i=1}^{2}|Eg_i|\Big\|_{L^{13/4}(B_R \cap N_{R^{1/2+\delta}}(\mc{Z}(Q_1,Q_2)))} 
    \\&
\leq C_{\epsilon}R^{-c\delta \epsilon}R^{10\epsilon} \big(\avprod_{i=1}^{2}\|g_i\|_{L^2}\big)^{\frac{12}{13}+\epsilon}
\big(\avprod_{i=1}^{2}\max_{\theta \in \mathcal{P}(R^{-1/2}) }\|g_i\|_{L^{2}_{\mathrm{avg}}(\theta)}\big)^{\frac{1}{13}-\epsilon}
\end{split}
\end{equation}
under the induction  hypothesis \eqref{inductionhypothesis}.
\end{lem}

Let us postpone the proof of the lemma to the next section and consider the case that such $\mc{Z}(Q_1,Q_2)$ does not exist. The advantage of this case is that we can control tangent spaces of a variety. Let us explain more.

Let $\gamma$ be a fixed constant smaller than the implied constant in \eqref{separated}. This constant $\gamma$ is independent of all the parameters, for example, $\epsilon$, $\delta$, and $R$. Recall that since $P_1$ is a transverse complete intersection, the tangent planes are well-defined at every point of the variety.
We say that a ball $B(x_0,R^{1/2+\delta}) \subset N_{R^{1/2+\delta}}(\mc{Z}(P_1)) \cap B_R$ is \textit{regular} if on each connected component of $\mc{Z}(P_1) \cap B(x_0,10R^{1/2+\delta})$ the tangent space $T(\mc{Z}(P_1))$ is constant up to angle $\gamma$. For a regular ball $B$, we pick a point $z \in B \cap \mc{Z}(P_1)$ and define $V_B$ to be the two-dimensional tangent plane $T_z (\mc{Z}(P_1))$. It is proved on page 126 of \cite{guth2018} (see also page 16 of \cite{bejenaru2020optimal}) that, by the assumption that such $\mc{Z}(Q_1,Q_2)$ does not exist, there exists a 2-dimensional plane $V$ such that
\begin{equation}\label{regularball}
    \Big\|\avprod_{i=1}^{2}|Ef_i|\Big\|_{L^{13/4}(B_R \cap N_{R^{1/2+\delta}}(\mc{Z}(P_1) ))}
    \lesssim \Big\|\avprod_{i=1}^{2}|Ef_i|\Big\|_{L^{13/4}(\bigcup_{B \in \mathcal{B}_V}B)},
\end{equation}
where $\mathcal{B}_V$ is the set of regular balls such that the angle between $V_B$ and $V$ is smaller than $\gamma$. Since this statement is proved several times in the literature, we omit the details.

For simplicity, we introduce the notation 
\begin{equation}
N_1:=\bigcup_{B \in \mathcal{B}_V}B \subset B_R \cap N_{R^{1/2+\delta}}(\mc{Z}(P_1)).
\end{equation} 
Define
\begin{equation}\label{highlow}
\begin{split}
    &\T_{\geq 4\gamma}:=\{T \in \T: \mathrm{Angle}(v(T),V) \geq 4\gamma  \},
    \\&
    \T_{< 4\gamma}:=\{T \in \T: \mathrm{Angle}(v(T),V) < 4\gamma  \}.
\end{split}
\end{equation}
We split functions $Ef_i$ into three parts:
\begin{equation}
    Ef_i= Ef_{i,\geq 4\gamma}+Ef_{i,<4\gamma} +\mathrm{RapDec}(R)\|f_i\|_2,
\end{equation}
where
\begin{equation}
    f_{i,\geq 4\gamma}:=\sum_{T \in \T_{\geq 4\gamma}}f_{i,T}, \;\;\;
     f_{i,<4\gamma}:=\sum_{T \in \T_{< 4\gamma} }f_{i,T}.
\end{equation}
By the triangle inequality, the right hand side of \eqref{regularball} is bounded by 
\begin{equation}
    \begin{split}
      & \lessapprox
       \||Ef_{1, \geq 4 \gamma}Ef_{2, \geq 4\gamma}|^{1/2}\|_{L^{13/4}(N_1)}
      \\&+
       \||Ef_{1, < 4 \gamma}Ef_{2, \geq 4\gamma}|^{1/2}\|_{L^{13/4}(N_1)}
       \\&+
        \||Ef_{1, \geq 4 \gamma}Ef_{2, < 4\gamma}|^{1/2}\|_{L^{13/4}(N_1)}
        \\&+
         \||Ef_{1, < 4 \gamma}Ef_{2, < 4\gamma}|^{1/2}\|_{L^{13/4}(N_1)}.
    \end{split}
\end{equation}
We say that we are in \textit{a high angle dominant case} if the first three terms dominate the last term. Otherwise, we say that we are in \textit{a low angle dominant case}.

\subsection{The high angle dominant case}\label{highangle}
In this case, by the symmetric role of $f_1$ and $f_2$ and the $L^2$-orthogonality, the desired estimate \eqref{wallcase} follows from
\begin{equation}\label{highanglecase}
    \begin{split}
        &\||Ef_{1, \geq 4 \gamma}Ef_{2}|^{1/2}\|_{L^{13/4}(N_1)}
        \\& \leq
        CC_{\epsilon}R^{10\epsilon} \big(\avprod_{i=1}^{2}\|f_i\|_{L^2}\big)^{\frac{12}{13}+\epsilon}
\big(\avprod_{i=1}^{2}\max_{\theta \in \mathcal{P}(R^{-1/2}) }\|f_i\|_{L^{2}_{\mathrm{avg}}(\theta)}\big)^{\frac{1}{13}-\epsilon}
    \end{split}
\end{equation}
for some small constant $C$.
\medskip

Let us consider two subcases according to whether there exists a one-dimensional transverse complete intersection $\mc{Z}(Q_1,Q_2)$ of degree at most $D_1$ satisfying
\begin{equation}
    \||Ef_{1, \geq 4\gamma}Ef_2|^{1/2}\|_{L^{13/4}(N_1)}  \lesssim 
    \||Ef_{1, \geq 4\gamma}Ef_2|^{1/2}\|_{L^{13/4}(B_R \cap N_{R^{1/2+\delta}}(\mc{Z}(Q_1,Q_2)))}. 
\end{equation}
If such $\mc{Z}(Q_1,Q_2)$ exists, then we apply Lemma \ref{onevariety}, and by the $L^2$-orthogonality, we obtain \eqref{highanglecase}. Hence, we may assume that such $\mc{Z}(Q_1,Q_2)$ does not exist.
\medskip

Recall that $N_1 \subset B_R \cap N_{R^{1/2+\delta}}(\mc{Z}(P_1))$. 
We apply the polynomial partitioning lemma (Lemma \ref{polypartitioning}) to the function $\chi_{N_1}|Ef_{1, \geq 4\gamma}Ef_2|^{1/2}$ with the degree $D_1$. Then the second case of Lemma \ref{polypartitioning} cannot happen. Thus, there exist a polynomial $P_2:\R^3 \rightarrow \R$ of degree at most $D_1$ such that
\begin{equation}
\R^3 \setminus \mc{Z}(P_2) = \bigsqcup_{k=1}^{M} \widetilde{O}_k',
\end{equation}
where $M \simeq (D_1)^2$, and $\widetilde{O}_k'$ is a connected component of $\R^3 \setminus \mc{Z}(P_2)$, 
and if we define the cells $\widetilde{O}_k:=B_R \cap \big(O_k' \setminus N_{R^{1/2+\delta}}(\mc{Z}(P_2))\big)$, then
\begin{equation}\label{412}
\||Ef_{1, \geq 4\gamma}Ef_2|^{1/2}\|_{L^{13/4}( N_1)}^{13/4} \simeq (D_1)^2\||Ef_{1,\geq 4\gamma}Ef_2|^{1/2}\|_{L^{13/4}(N_1 \cap \widetilde{O}_{k} )}^{13/4}
\end{equation}
for $(D_1)^2$ many cells $\widetilde{O}_k$. By abusing the notation, we pretend that the cells $\widetilde{O}_k$ always satisfy the above inequality.

Define $\T_{\geq 4\gamma, k}$ by a sub-collection of the tubes in $\T_{\geq 4\gamma}$ that intersect $\widetilde{O}_k$
and $\T_k$ by a sub-collection of the tubes in $\T$ that intersect $\widetilde{O}_k$.
Since each tube $T \in \T$ can pass through at most $D_1+1$ many $\widetilde{O}_k$, we know that
\begin{equation}
    \sum_{k}\Big\|\sum_{T \in \T_{k} }f_{2,T}\Big\|_{L^2}^2 \lesssim D_1 \|f_2\|_2^2.
\end{equation}
As observed in \cite{bejenaru2020optimal}, each $T \in \T_{\geq 4\gamma}$ can intersect $\widetilde{O}_{k} \cap N_1$ at most $O(D^3)$ times. This is because $T \cap \mc{Z}(P_1)$ is contained in at most $O(D^3)$ balls of radius $R^{1/2+\delta}$ (see Lemma 5.7 of \cite{guth2018}). By this observation and the $L^2$-orthogonality, we obtain
\begin{equation}
    \sum_{k}\Big\|\sum_{T \in \T_{\geq 4\gamma, k} }f_{1,T}\Big\|_{L^2}^2 \lesssim D^3 \|f_1\|_2^2.
\end{equation}
By pigeonholing, we can choose $k_0$ such that
\begin{equation}
    \Big\|\sum_{T \in \T_{k_0} }f_{2,T}\Big\|_{L^2}^2 \lesssim D_1^{-1} \|f_2\|_2^2, \;\;\; 
    \Big\|\sum_{T \in \T_{\geq 4\gamma, k_0} }f_{1,T}\Big\|_{L^2}^2 \lesssim D^3D_1^{-2} \|f_1\|_2^2.
\end{equation}
Therefore, if we use \eqref{412} with $k=k_0$ and apply the induction hypothesis \eqref{inductionhypothesis}, by the above inequalities, we have
\begin{equation}
    \begin{split}
        &\||Ef_{1, \geq 4\gamma}Ef_2|^{1/2}\|_{L^{13/4}(N_1)} \\&\lesssim D^{\frac{9}{13}+\epsilon}D_1^{-\frac{1}{13}}
        C_{\epsilon}R^{10\epsilon} \big(\avprod_{i=1}^{2}\|f_i\|_{L^2}\big)^{\frac{12}{13}+\epsilon}
\big(\avprod_{i=1}^{2}\max_{\theta \in \mathcal{P}(R^{-1/2}) }\|f_i\|_{L^{2}_{\mathrm{avg}}(\theta)}\big)^{\frac{1}{13}-\epsilon}.
    \end{split}
\end{equation}
It suffices to take $D_1$ large enough compared to the constant $D$.

\subsection{The low angle dominant case}\label{lowanglecase1}
In this case, we prove the following.

\begin{equation}\label{lowanglecase}
    \begin{split}
        &\Big\|\avprod_{i=1}^{2}|Ef_{i, < 4 \gamma}|\Big\|_{L^{13/4}(N_1)}
        \\& \leq
        C_{\epsilon}R^{-c\epsilon\delta}R^{10\epsilon} \big(\avprod_{i=1}^{2}\|f_i\|_{L^2}\big)^{\frac{12}{13}+\epsilon}
\big(\avprod_{i=1}^{2}\max_{\theta \in \mathcal{P}(R^{-1/2}) }\|f_i\|_{L^{2}_{\mathrm{avg}}(\theta)}\big)^{\frac{1}{13}-\epsilon}.
    \end{split}
\end{equation}
One advantage of working with the wave packets with low angles is that it allows for an $L^4$-estimate as good as in \cite{MR3454378}. This is one observation already appeared in \cite{bejenaru2020optimal}. We will make use of it later.
\\

Recall that $N_1$ is a subset of $B_R \cap N_{R^{1/2+\delta} }(\mc{Z}(P_1))$. For simplicity, we define $W:=B_R \cap N_{R^{1/2+\delta} }(\mc{Z}(P_1))$.
We decompose the ball $B_R$ into smaller balls $B_j$ of radius $R^{1-\delta}$. For each ball $B_j$, we define transverse and tangential tubes as in \cite{MR3454378}.

\begin{defi}[Tangential tubes]
$\T_{j,-}$ is the set of all $T \in \T_{< 4\gamma}$ obeying the following  two conditions.
\begin{itemize}
    \item $T \cap W \cap B_j \neq \emptyset$
    \item If $z$ is any point of $\mc{Z}(P_1)$ lying in $2B_j \cap 10T$, then
    \begin{equation}
        \mathrm{Angle}(v(T), T_z (\mc{Z}(P_1))) \leq R^{-1/2+2\delta}.
    \end{equation}
\end{itemize}
Here $T_z (\mc{Z}(P_1))$ denotes the tangent space of $Z$ at the point $z$.
\end{defi}

\begin{defi}[Transverse tubes]
$\T_{j,+}$ is the set of all $T \in \T_{< 4\gamma}$ obeying the following two conditions.
\begin{itemize}
    \item $T \cap W \cap B_j \neq \emptyset$
    \item There exists a point of $\mc{Z}(P_1)$ lying in $2B_j \cap 10T$, so that
    \begin{equation}
        \mathrm{Angle}(v(T), T_z (\mc{Z}(P_1))) > R^{-1/2+2\delta}.
    \end{equation}
\end{itemize}
\end{defi}
 Notice that any tube $\T_{< 4\gamma}$ that intersects $W \cap B_j$ lies in exactly one of $\T_{j,+}$ and $\T_{j,-}$. Thus, on the set $W \cap B_j$,
\begin{equation}
    Ef_{i,<4\gamma}= \sum_{T \in \T_{j,+} } Ef_{1,T} +\sum_{T \in \T_{j,-} } Ef_{1,T} + \mathrm{RapDec}(R)\|f\|_2.
\end{equation}
For simplicity, we define $f_{i,j,+}:=\sum_{T \in \T_{j,+}}f_{i,T}$ and define $f_{i,j,-}$ similarly. 

We decompose $N_1$ into smaller parts $B_{j} \cap N_1$ and bound the left hand side of \eqref{lowanglecase} by
\begin{equation}\label{fourterms}
\begin{split}
    &\lessapprox
    \Big(\sum_{B_j}\||Ef_{1,j,+}Ef_{2,j,+}|^{1/2}\|_{L^{13/4}(N_1 \cap B_j)}^{13/4} \Big)^{\frac{4}{13}} \\&+\Big(\sum_{B_j}\||Ef_{1,j,-}Ef_{2,j,+}|^{1/2}\|_{L^{13/4}(N_1 \cap B_j)}^{13/4}\Big)^{\frac{4}{13}}
    \\&+\Big(\sum_{B_j}\||Ef_{1,j,+}Ef_{2,j,-}|^{1/2}\|_{L^{13/4}(N_1 \cap B_j)}^{13/4}\Big)^{\frac{4}{13}}
     \\&+\Big(\sum_{B_j}\||Ef_{1,j,-}Ef_{2,j,-}|^{1/2}\|_{L^{13/4}(N_1 \cap B_j)}^{13/4}\Big)^{\frac{4}{13}}.
\end{split}
\end{equation}
If the first term dominates the others, we say that we are in \textit{a transverse wall case}. Otherwise, we say that we are in \textit{a tangential wall case}.

\subsubsection{The transverse wall case}\label{transversewall}
In this case, we do not use much information on $N_1$. In this subsubsection, the constant $C$ may vary from line-to-line. This constant is independent of all the parameters, for example, $\epsilon$, $\delta$, and $R$. We start with the following bound.
\begin{equation}\label{tw:step1}
     \||Ef_{1, < 4 \gamma}Ef_{2,<4\gamma}|^{1/2}\|_{L^{13/4}(N_1)}^{13/4}
     \lesssim \sum_{B_j}\||Ef_{1,j,+}Ef_{2,j,+}|^{1/2}\|_{L^{13/4}(W \cap B_j)}^{13/4}.
\end{equation}
This case can be dealt with by following the argument in \cite{MR3454378}  line by line. Let us give the details. We apply the induction hypothesis \eqref{inductionhypothesis}  and the right hand side of \eqref{tw:step1} is bounded by
\begin{equation}\label{tw:step2}
CC_{\epsilon}^{\frac{13}{4}}
R^{\frac{13}{4} \cdot (1-\delta)10\epsilon}
\sum_{B_j}
\big(\avprod_{i=1}^{2}\|f_{i,j,+}\|_{L^2}\big)^{3+\frac{13\epsilon}{4}}
\big(\avprod_{i=1}^{2}\max_{\theta}\|f_{i,j,+}\|_{L^{2}_{\mathrm{avg}}(\theta)}\big)^{\frac{1}{4}-\frac{13\epsilon}{4}}.
\end{equation}
Next we apply the $L^2$-orthogonality, and replace $\|f_{i,j,+}\|_{L^2_{\mathrm{avg}}(\theta)}$ by $\|f_{i}\|_{L^2_{\mathrm{avg}}(\theta)}$. By the relation $\|\cdot\|_{l^{3+13\epsilon/4}} \leq \| \cdot \|_{l^2}$ and Cauchy-Schwarz inequality, \eqref{tw:step2} is further bounded by 
\begin{equation}\label{tw:step3}
    CC_{\epsilon}^{\frac{13}{4}}R^{-\frac{65}{2} \cdot \delta\epsilon}R^{\frac{13}{4} \cdot 10\epsilon }
\big(\avprod_{i=1}^{2}
\big(
\sum_{B_j}\|f_{i,j,+}\|_{L^2}^2 \big)^{\frac12(3+\frac{13\epsilon}{4})}\big)
\big(\avprod_{i=1}^{2}\max_{\theta}\|f_{i}\|_{L^{2}_{\mathrm{avg}}(\theta)}\big)^{\frac{1}{4}-\frac{13\epsilon}{4}}.
\end{equation}
By Lemma 5.7 of \cite{guth2018}, each tube $T \in \T$ belongs to $\T_{j,+}$ at most $O(D^3)$ many $j$ (see also Lemma 3.5 of \cite{MR3454378}). Hence, by the $L^2$-orthogonality, as in \eqref{38}, we obtain
\begin{equation}
    \sum_{B_j}\|f_{i,j,+}\|_{L^2}^2 \lesssim D^3 \|f_{i}\|_2^2.
\end{equation}
Therefore, \eqref{tw:step3} is bounded by
\begin{equation}
    CC_{\epsilon}^{\frac{13}{4}}D^{10}
    R^{-\frac{65}{2} \cdot \delta\epsilon}R^{\frac{13}{4} \cdot 10\epsilon}
\Big(\avprod_{i=1}^{2}
\|f_{i}\|_{L^2}\Big) ^{3+\frac{13\epsilon}{4}}
\big(\avprod_{i=1}^{2}\max_{\theta}\|f_{i}\|_{L^{2}_{\mathrm{avg}}(\theta)}\big)^{\frac{1}{4}-\frac{13\epsilon}{4}}.
\end{equation}
It suffices to note that $D$ is a fixed number independent of $R$ and we were able to assume that $R$ is large enough compared to $D$ by the base of the induction.

\subsubsection{The tangential wall case} 
In this case, the first term in \eqref{fourterms} is bounded by the other terms. Therefore, it suffices to prove the following proposition.
\begin{prop}\label{mainestimate}
For every pair of separated functions $g_1$ and $g_2$, and ball $B_j$, it holds that
\begin{equation} 
\begin{split}
&\||Eg_{1,j,-}Eg_{2, <4 \gamma}|^{\frac12}\|_{L^{13/4}(B_j \cap N_1)} 
\\&
\leq C_{\epsilon}R^{O(\delta)} \big(\avprod_{i=1}^{2}\|g_i\|_{L^2}\big)^{\frac{12}{13}}
\big(\avprod_{i=1}^{2}\max_{\theta}\|g_i\|_{L^{2}_{\mathrm{avg}}(\theta)}\big)^{\frac{1}{13}}.
\end{split}
\end{equation}
\end{prop}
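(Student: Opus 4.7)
The plan is to combine a geometric bilinear $L^3$-type estimate on $B_j\cap N_1$ (obtained by a two-dimensional reduction together with Lemma \ref{L2estimate}, exploiting the small transverse measure of the tubes of $g_{2,<4\gamma}$ in the thin shell $N_1$) with a pointwise $L^\infty$ control coming from the wave packet decomposition, and to interpolate the two via H\"older's inequality at exponent $13/4$. This interpolation is engineered so that the $L^3$ bound contributes the $\|g_i\|_{L^2}^{12/13}$ factor and the $L^\infty$ bound contributes the $\bigl(\max_\theta\|g_i\|_{L^2_{\mathrm{avg}}(\theta)}\bigr)^{1/13}$ factor in the claim.

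Specifically, I would first cover $B_j\cap N_1$ by balls $B$ of radius $r = R^{1/2+O(\delta)}$, the scale on which $\mc{Z}(P_1)$ is essentially flat. On each such $B$, the tangentiality condition on $\T_{j,-}$, together with a dyadic pigeonholing in the angle between tubes in $\T_{<4\gamma}$ and the tangent plane $V_B$, forces the wave packets of both $Eg_{1,j,-}$ and $Eg_{2,<4\gamma}$ to lie essentially along $V_B$, reducing the local bilinear estimate to one for a parabolic curve in $\R^2$, where the sharp bilinear $L^2$ estimate of Carleson--Sj\"olin type is available. Gluing the local estimates via the $L^2$-orthogonality of wave packets (Lemma \ref{l2orthogonality}) and the polynomial Wolff axioms (Lemma \ref{polylemma}) applied to $\mc{Z}(P_1)\cap B_j$ to count the tangential directions should produce the bilinear $L^3$ bound
\[
\bigl\||Eg_{1,j,-}Eg_{2,<4\gamma}|^{1/2}\bigr\|_{L^3(B_j\cap N_1)} \leq C_\epsilon R^{O(\delta)}\bigl(\|g_1\|_{L^2}\|g_2\|_{L^2}\bigr)^{1/2}.
\]
The trivial pointwise bound $|Eg_i(x)|\lesssim \max_\theta\|g_i\|_{L^2_{\mathrm{avg}}(\theta)}$, which follows from the triangle inequality and Cauchy--Schwarz applied to the wave packet decomposition, then controls $F := |Eg_{1,j,-}Eg_{2,<4\gamma}|^{1/2}$ in $L^\infty$. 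H\"older's inequality
\[
\|F\|_{L^{13/4}(B_j\cap N_1)}^{13/4} \leq \|F\|_{L^\infty(B_j)}^{1/4}\,\|F\|_{L^3(B_j\cap N_1)}^3
\]
applied with the two bounds above yields the claimed estimate after taking the $4/13$-th root and distributing the exponent $1/4$ on $\|F\|_{L^\infty}$ between the two factors $\max_\theta\|g_i\|_{L^2_{\mathrm{avg}}(\theta)}$.

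The main obstacle is obtaining the $L^3$ bilinear bound with only $R^{O(\delta)}$ loss, since $L^3$ sits at the endpoint of the bilinear restriction conjecture in $\R^3$ where unconditional estimates are not known. Here the tangential geometry is essential: the wave packets of $Eg_{1,j,-}$ lie in the thin shell $N_1$, and after pigeonholing the tubes of $g_{2,<4\gamma}$ into a single dyadic angle scale making definite angle with $V_B$, one gains a factor in the measure of $T\cap N_1\cap B_j$ relative to a full tube, which is precisely the content of Lemma \ref{L2estimate}. If a direct $L^3$ bound proves intractable, the fallback is to apply the induction hypothesis \eqref{inductionhypothesis} at the scale $R^{1-\delta}\le R/2$ to the ball $B_j$, in which case the induction loss $R^{10\epsilon(1-\delta)}$ already saves a factor $R^{-10\epsilon\delta}$ relative to the target, and this saved factor together with the geometric $L^2$ gain can be used to absorb the $\epsilon$ imprecision in the inductive exponents $12/13+\epsilon$ and $1/13-\epsilon$ down to the sharp $12/13$, $1/13$ pair in the proposition.
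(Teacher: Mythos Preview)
Your interpolation scheme has a genuine gap: the $L^3$ bilinear bound
\[
\bigl\||Eg_{1,j,-}Eg_{2,<4\gamma}|^{1/2}\bigr\|_{L^3(B_j\cap N_1)} \le C_\epsilon R^{O(\delta)}\bigl(\|g_1\|_{L^2}\|g_2\|_{L^2}\bigr)^{1/2}
\]
is \emph{false} in this setting. Take $\mc{Z}(P_1)$ to be a hyperplane and $g_i$ the characteristic function of an anisotropic rectangle of dimensions $R^{-1}\times R^{-1/2}$ lying in a tangential cap, with the two rectangles separated. Then $|Eg_i|\simeq R^{-3/2}$ on the entire slab $B_R\cap N_1$ (volume $\simeq R^{5/2}$), giving $\|F\|_{L^3}\simeq R^{-2/3}$ while $(\|g_1\|_2\|g_2\|_2)^{1/2}=R^{-3/4}$, a discrepancy of $R^{1/12}$. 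More generally, after your angle pigeonholing to length $l$, the best $L^3$ bound obtainable from the $2$D reduction and Lemma~\ref{L2estimate} carries a factor $l^{1/12}$; interpolating with the trivial $L^\infty$ bound then leaves an uncancelled $l^{1/13}$, which for $l\sim R$ is a loss of $R^{1/13}$, not $R^{O(\delta)}$. Your fallback of invoking the induction hypothesis on $B_j$ does not repair this either: it produces the constant $C_\epsilon R^{10\epsilon(1-\delta)}$, not $C_\epsilon R^{O(\delta)}$, and the saving $R^{-10\epsilon\delta}$ cannot absorb a power $R^{1/13}$.

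The paper proceeds differently. It interpolates $L^2$ and $L^4$ (not $L^3$ and $L^\infty$), and expresses \emph{both} endpoint bounds in terms of the wave-packet sums $A=\sum_{T\in\T_{j,-}}\|g_{1,T}\|_2^2$ and $B=\sum_{T\in\T_{-,l}}\|g_{2,T}\|_2^2$, obtaining $\|F\|_{L^{13/4}}\lesssim R^{-1/26}l^{3/52}(AB)^{1/4}$. The crucial step you are missing is that the polynomial Wolff axioms (Lemma~\ref{polylemma}) are applied to \emph{both} $A$ and $B$: for $B$ one uses that the tubes in $\T_{-,l}$ have a segment of length $\sim l$ inside $N_{R^{1/2+\delta}}(\mc{Z}(P_1))$, yielding $B\lesssim l^{-1}R^{1/2}\max_\theta\|g_2\|_{L^2_{\mathrm{avg}}(\theta)}^2$. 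Using this Wolff bound with weight $1/13$ and $L^2$-orthogonality with weight $12/13$ produces an extra factor $l^{-1/52}$, which combines with the $l^{3/52}$ to give $(l/R)^{1/26}\le 1$. Your $L^\infty$ endpoint is blind to $l$, so the $l$-dependent Wolff gain for $g_2$ is never used, and that is precisely the factor you are short by.
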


Recall that $N_1$ is a subset of $W=B_R \cap N_{R^{1/2+\delta}}(\mc{Z}(P_1))$ and $\mc{Z}(P_1)$ is a two-dimensional complete intersection of degree at most $D$.

The proof of the above proposition is the main part of this paper.
We start with the observation on page 27 of \cite{MR4205111}: For every tube $T \in \T$, there exist some tubes $\widetilde{T}_{T,m}$ of dimension $5R^{1/2+\delta} \times 5R^{1/2+\delta} \times l_m$ for some $l_m \geq R^{1/2+\delta}$ such that
\begin{equation}\label{zahlobservation}
    T \cap N_{R^{1/2+\delta}}(\mc{Z}(P_1)) \subset \bigsqcup_{m=1}^{\lesssim C_{D}} \widetilde{T}_{T,m} \subset N_{20R^{1/2+\delta}}(\mc{Z}(P_1)),
\end{equation}
and
\begin{equation}\label{distance}
    \mathrm{dist}(\widetilde{T}_{T,m},\widetilde{T}_{T,m'}) \geq 2R^{1/2+\delta}
\end{equation}
for any $m,m'$. The property \eqref{distance} is not crucial, but it helps to avoid some technical issue.
Let us prove the observation. By Theorem \ref{bpr96}, there are at most $C_{D}$ many connected components of $N_{R^{1/2+\delta}}(T) \cap \mc{Z}(P_1)$. We take the smallest union of subtubes of $N_{R^{1/2+\delta}}(T)$ such that the union covers 
$N_{R^{1/2+\delta}}(T) \cap \mc{Z}(P_1)$. We slightly enlarge each subtube so that their union covers $T \cap N_{R^{1/2+\delta}}(\mc{Z}(P_1))$. Note that each subtube is contained in $N_{20R^{1/2+\delta}}(\mc{Z}(P_1))$ by the construction of the subtubes. The distance condition \eqref{distance} can be easily attained by modifying the subtubes. This completes the proof of the observation.
\medskip

We take the characteristic function $\chi_{\widetilde{T}_{T,m}}$ of a tube $\widetilde{T}_{T,m}$. By the observation \eqref{zahlobservation}, it holds that 
\begin{equation}\label{identity}
    Eg_{2, <4\gamma}(x) = \sum_{T \in \T_{<4\gamma} }\sum_{m=1}^{\lesssim C_{D}} \chi_{\widetilde{T}_{T,m}}(x) Eg_{2,T}(x) +\mathrm{RapDec}(R)\|g_2\|_2
\end{equation}
for every $ x \in B_j \cap W$.
Note that the above identity does not need to be true outside of $B_j \cap W$. Using \eqref{identity}, we obtain
\begin{equation}
\begin{split}
    &\||Eg_{1,j,-}Eg_{2,<4\gamma}|^{1/2}\|_{L^{13/4}(B_j \cap N_1)} \\&
    \lessapprox
    \Big\||Eg_{1,j,-}|^{1/2}
    \big|
    \sum_{T \in \T_{<4\gamma} }\sum_{m=1}^{\lesssim C_{D}} \chi_{\widetilde{T}_{T,m}}Eg_{2,T}
    \big|^{1/2}\Big\|_{L^{13/4}(B_j \cap N_1)}.
\end{split}
\end{equation}
By the triangle inequality and taking the maximum, the right-hand side above is bounded by 
\begin{equation}
    C(C_D)^{1/2}
    \Big\||Eg_{1,j,-}|^{1/2}
    \big|
    \sum_{T \in \T_{<4\gamma} } \chi_{\widetilde{T}_{T,m}}Eg_{2,T}
    \big|^{1/2}\Big\|_{L^{13/4}(B_j \cap N_1)}
\end{equation}
for some $m$. For simplicity, let us use the notation $\chi_{\widetilde{T}_{T}}$ for $\chi_{\widetilde{T}_{T,m}}$.
Recall that the length of the longest direction of the tube $\widetilde{T}_{T}$ is greater than $R^{1/2+\delta}$ and smaller than $R$. Thus, by a dyadic pigeonhling and taking a sub-collection, we may assume that the longest directions of all the nonempty tubes $\widetilde{T}_{T}$ are comparable and we denote the length by $l$. Recall that the constant $D$ is independent of the parameter $R$.
Hence, what we need to prove becomes
\begin{equation}\label{330}
\begin{split}
    &\Big\||Eg_{1,j,-}|^{1/2}
    \big|
    \sum_{T \in \T_{<4\gamma} } \chi_{\widetilde{T}_{T}}Eg_{2,T}
    \big|^{1/2}\Big\|_{L^{13/4}(B_j \cap N_1)}
    \\&\lesssim
    R^{O(\delta)} \big(\avprod_{i=1}^{2}\|g_i\|_{L^2}\big)^{\frac{12}{13}}
\big(\avprod_{i=1}^{2}\max_{\theta}\|g_i\|_{L^{2}_{\mathrm{avg}}(\theta)}\big)^{\frac{1}{13}},
\end{split}
\end{equation}
where $\widetilde{T}_T$ has a longest direction with length 0 or $l$.
\\

We will interpolate the $L^2$ estimate and the $L^4$ estimate by H\"{o}lder's inequality:

\begin{equation}\label{holder}
\begin{split}
    &\Big\||Eg_{1,j,-}|^{\frac12}
    \big|
    \sum_{T \in \T_{<4\gamma} } \chi_{\widetilde{T}_{T}}Eg_{2,T}
    \big|^{\frac12}\Big\|_{L^{13/4}(B_j \cap N_1)}
    \\&
    \lesssim
    \Big\||Eg_{1,j,-}|^{\frac12}
    \big|
    \sum_{T \in \T_{<4\gamma} } \chi_{\widetilde{T}_{T}}Eg_{2,T}
    \big|^{\frac12}
    \Big\|_{L^{2}(B_j \cap N_1)}^{\frac{3}{13}}
    \\& \times
    \Big\||Eg_{1,j,-}|^{\frac12}
    \Big|
    \sum_{T \in \T_{<4\gamma} } \chi_{\widetilde{T}_{T}}Eg_{2,T}
    \big|^{\frac12}\Big\|_{L^{4}(B_j \cap N_1)}^{\frac{10}{13}}.
    \end{split}
\end{equation}

Let us first estimate the $L^4$-norm. We define $\Theta_{\mathrm{Leng}(l)}$ by the collection of directions of the tubes $T \in \T$ for which the intersection of the tube and $W \cap B_j$ contains a tube of  dimensions $R^{1/2+\delta} \times R^{1/2+\delta} \times l$. Define
\begin{equation}
    \T_{\mathrm{Leng}(l)}:= \{T \in \T_{<4\gamma}: v(T) \in \Theta_{\mathrm{Leng}(l)}\}, \;\;\;\; g_{2,\mathrm{Leng}(l)}:= \sum_{T \in \T_{\mathrm{Leng}(l)} }g_{2,T}.
\end{equation} 

We claim that
\begin{equation}\label{l4claim}
\begin{split}
    &\Big\||Eg_{1,j,-}|^{\frac12}
    \Big|
    \sum_{T \in \T_{<4\gamma} } \chi_{\widetilde{T}_{T}}Eg_{2,T}
    \big|^{\frac12}\Big\|_{L^{4}(B_j \cap N_1)}
    \\&
    \lessapprox R^{-\frac18+O(\delta)}\Big(\big(\sum_{T \in \T_{j,-} } \|g_{1,T}\|_2^2 \big)^{\frac12} \big( \sum_{T \in \T_{\mathrm{Leng}(l)}} \| g_{2,T}\|_2^2\big)^{\frac12}\Big)^{\frac12}.
\end{split}
\end{equation}
Recall that $B_j \cap W$ is a union of regular balls $Q$ of radius $R^{1/2}$. Define $\T_{j,-,Q}$ by the set of tubes in $\T_{j,-}$ intersecting $Q$ and define $\T_{\mathrm{Leng}(l),Q}$ similarly.  Note that on each set $Q \cap W \cap B_j$
\begin{equation}\label{pointwiseonQ}
    Eg_{1,j,-}=\sum_{T \in \T_{j,-,Q}}Eg_{1,T}+\mathrm{RapDec}(R)\|g_1\|_2.
\end{equation}
Similarly, on each set $Q \cap W \cap B_j$
\begin{equation}
    \sum_{T \in \T_{<4\gamma} } \chi_{\widetilde{T}_{T}}Eg_{2,T}=\sum_{T \in \T_{\mathrm{Leng}(l),Q,\sim}}Eg_{2,T}+\mathrm{RapDec}(R)\|g_2\|_2,
\end{equation}
 where 
 \begin{equation}
     \T_{\mathrm{Leng}(l),Q,\sim}:= \{ T \in \T_{\mathrm{Leng}(l),Q}: \widetilde{T}_T \cap Q \cap W \cap B_j \neq \emptyset   \}.
 \end{equation}
Here, the set $\T_{<4\gamma}$ was defined in \eqref{highlow}, $B_j$ is a ball of radius $R^{1-\delta}$, and $\widetilde{T}_T$ is a sub-tube of $T \in \T_{<4\gamma}$ the longest length of which is $0$ or $l$. By the above identities, we know that 
\begin{equation}\label{claimstep2}
\begin{split}
   &\Big\||Eg_{1,j,-}|^{\frac12}
    \Big|
    \sum_{T \in \T_{<4\gamma} } \chi_{\widetilde{T}_{T}}Eg_{2,T}
    \big|^{\frac12}\Big\|_{L^{4}(B_j \cap N_1)}
    \\& \lessapprox
    \Big(\sum_{Q: Q \cap W \cap B_j \neq \emptyset} \Big\|\big|\sum_{T \in \T_{j,-,Q}} Eg_{1,T} \sum_{T \in \T_{\mathrm{Leng}(l),Q,\sim}}
    Eg_{2,T}
    \big|^{\frac12}\Big\|_{L^{4}(Q)}^4\Big)^{\frac14}.
\end{split}
\end{equation}

We fix $Q$.
Since $Q \cap W \neq \emptyset$, we can choose a point $z \in \mc{Z}(P_1) \cap N_{R^{1/2+\delta}}(Q)$. Notice that by the definition of $\T_{j,-}$, every $T \in \T_{j,-,Q}$ satisfies that
$
    \mathrm{Angle}(v(T),T_z\mc{Z}(P_1)) \leq R^{-1/2+2\delta}$. Thus,  the function $\sum_{T \in \T_{j,-,Q}} g_{1,T}$ is supported on some strip of width $R^{-1/2+O(\delta)}$. 
    By a simple change of variables, we may assume that the strip is $[0,1] \times [0,R^{-1/2+O(\delta)}]$. On the other hand, by the definition, we know that $\T_{\mathrm{Leng}(l),Q,\sim} \subset \T_{<4\gamma}$, and thus, the support of $\sum_{T \in \T_{\mathrm{Leng}(l),Q,\sim}}g_{2,T}$ is contained in $[0,1] \times [0,100\gamma]$. Since $g_1$ and $g_2$ are separated and the constant $\gamma$ is much smaller than the implied constant in the definition of the separation \eqref{separated}, we can conclude that 
    \begin{equation}
        \mathrm{dist}\bigg(\pi \Big( \mathrm{supp}\big(\sum_{T \in \T_{j,-,Q}} g_{1,T} \big) \Big), 
        \pi \Big( \mathrm{supp}\big(\sum_{T \in \T_{\mathrm{Leng}(l),Q,\sim}}g_{2,T} \big) \Big)
        \bigg) \gtrsim 1,
    \end{equation}
    where $\pi:\R^2 \rightarrow \R$ is a projection map defined as $\pi(\xi_1,\xi_2):=\xi_1$.
    Therefore, we can perform the the standard $L^4$-argument (see Lemma 3.10 of \cite{MR3454378}), or simply apply Theorem 1.3 of \cite{bejenaru2019multilinear}, and obtain
    \begin{equation}\label{claimstep2.5}
    \begin{split}
        &\Big\|\big|\sum_{T \in \T_{j,-,Q}} Eg_{1,T} \sum_{T \in \T_{\mathrm{Leng}(l),Q,\sim}}
    Eg_{2,T}
    \big|^{\frac12}\Big\|_{L^{4}(Q)}
    \\& \lesssim
    R^{-\frac18+O(\delta)}
    \Big(
    \big(\sum_{T \in \T_{j,-,Q} } \|g_{1,T}\|_2^2 \big)^{\frac12} \big( \sum_{T \in \T_{\mathrm{Leng}(l),Q,\sim}} \| g_{2,T}\|_2^2\big)^{\frac12}\Big)^{\frac12}.
    \end{split}
    \end{equation}
We bound the sum over $\T_{\mathrm{Leng}(l),Q,\sim}$ by that over $\T_{\mathrm{Leng}(l),Q}$.
Define a function $\chi(T,Q)$ whose value is 1 if $T$ and $Q$ intersect, and 0 otherwise. 
Notice that
\begin{equation}
    \sum_{T \in \T_{j,-,Q} }\|g_{1,T}\|_2^2
    = \sum_{T \in \T_{j,-} }
    \chi(T,Q)
    \|g_{1,T}\|_2^2.
\end{equation}
We have a similar property for $\T_{\mathrm{Leng}(l),Q}$.
Notice that for every $T_1$ and $T_2$ whose direction is separated by $\simeq 1$ it holds that 
\begin{equation}
    \sum_Q \chi(T_1,Q)\chi(T_2,Q) \lesssim R^{O(\delta)}.
\end{equation}
By this inequality,
we obtain
\begin{equation}
\begin{split}
    &\sum_Q \sum_{T \in \T_{j,-,Q} }\|g_{1,T}\|_2^2 \sum_{T \in \T_{\mathrm{Leng}(l),Q} }\|g_{2,T}\|_2^2
    \\&
    \lesssim 
    R^{O(\delta)}
     \sum_{T_1 \in \T_{j,-} }
     \sum_{T_2 \in \T_{\mathrm{Leng}(l)} }
    \|g_{1,T_1}\|_2^2 
    \|g_{2,T_2}\|_2^2.
\end{split}
\end{equation}
Therefore, by  \eqref{claimstep2.5} and the above inequality, we know that
\begin{equation}\label{claimstep3}
\begin{split}
    &\sum_Q \Big\|\big|\sum_{T \in \T_{j,-,Q}} Eg_{1,T} \sum_{T \in \T_{\mathrm{Leng}(l),Q,\sim}}
    Eg_{2,T}
    \big|^{\frac12}\Big\|_{L^{4}(Q)}^4
    \\& \lesssim
    R^{-\frac12+O(\delta)}
    \sum_{T \in \T_{j,-} } \|g_{1,T}\|_2^2
    \sum_{T \in \T_{\mathrm{Leng}(l)}} \| g_{2,T}\|_2^2.
    \end{split}
\end{equation}
The claim \eqref{l4claim} follows by combining  \eqref{claimstep2} and \eqref{claimstep3}.
\medskip

Let us move on to the $L^2$-estimate. A main estimate is the following. 

\begin{lem}\label{L2estimate}
\begin{equation}
\|\sum_{T \in \T_{< 4\gamma} } \chi_{\widetilde{T}_{T}}Eg_{2,T}\|_{L^2(B_j \cap W)}^2 \lesssim R^{O(\delta)} \big(\frac{l}{R} \big)
\sum_{T \in \T_{\mathrm{Leng}(l)}}
\|
 Eg_{2,T}\|_{L^2(w_{B_j})}^2.
\end{equation}
\end{lem}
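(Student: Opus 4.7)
The plan is to prove Lemma \ref{L2estimate} by combining approximate Fourier orthogonality in the direction parameter $\theta$ with the locally constant property of the wave packets. After the dyadic pigeonhole step performed earlier, only directions with $v_\theta \in \Theta_{-,l}$ contribute, so $\sum_{T \in \T_{<4\gamma}} \chi_{\widetilde T_T}Eg_{2,T}$ is effectively a sum over $T \in \T_{-,l}$.

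First I would group by direction, writing $\sum_{T \in \T_{<4\gamma}} \chi_{\widetilde T_T} Eg_{2,T} = \sum_\theta F_\theta$ with $F_\theta := \sum_{T \in \T(\theta) \cap \T_{-,l}} \chi_{\widetilde T_T} Eg_{2,T}$. After dominating the sharp cutoff $\chi_{\widetilde T_T}$ by a Schwartz bump adapted to $\widetilde T_T$, the function $F_\theta$ has Fourier transform essentially supported in a thickening of the paraboloid piece above $3\theta$: by the dual dimensions of $\widetilde T_T$, the thickening is $R^{-1/2-\delta}$ transversely (comparable to the $R^{-1/2}$-size of $3\theta$) and $l^{-1}$ along $v_\theta$. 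Distinct $\theta,\theta'$ therefore produce thickened Fourier pieces with $R^{O(\delta)}$-bounded overlap, so Plancherel yields
\begin{equation*}
    \Bigl\|\sum_\theta F_\theta\Bigr\|^2_{L^2(B_j \cap W)} \lesssim R^{O(\delta)} \sum_\theta \|F_\theta\|^2_{L^2(\R^3)}.
\end{equation*}

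For each $\theta$, the tubes $T \in \T(\theta)$ are parallel to $v_\theta$ and tile $B_R$ with $R^{O(\delta)}$-bounded overlap; the sub-tubes $\widetilde T_T$ inherit the same overlap, so a pointwise Cauchy-Schwarz gives $\|F_\theta\|^2_{L^2} \lesssim R^{O(\delta)} \sum_{T \in \T(\theta) \cap \T_{-,l}} \|\chi_{\widetilde T_T} Eg_{2,T}\|^2_{L^2}$. For each individual tube, the locally constant property of the wave packet says $|Eg_{2,T}|^2$ is essentially constant on balls of radius $R^{1/2+\delta}$ inside $T$; a volume-ratio comparison between $|\widetilde T_T \cap B_j \cap W| \lesssim l\, R^{1+2\delta}$ and $|T \cap B_j| \simeq R^{1-\delta}\, R^{1+2\delta}$ then produces
\begin{equation*}
    \|\chi_{\widetilde T_T} Eg_{2,T}\|^2_{L^2} \lesssim R^{O(\delta)}\, \tfrac{l}{R}\, \|Eg_{2,T}\|^2_{L^2(w_{B_j})}.
\end{equation*}
Chaining these three displays completes the proof.

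The hardest step will be the Fourier orthogonality across $\theta$: since $\chi_{\widetilde T_T}$ is a sharp cutoff whose Fourier transform only decays rather than vanishes outside the dual box, one must dominate it by a Schwartz bump and keep track of tails so that the sum over $T \in \T(\theta)$ does not spread out the effective Fourier support. This step is essential because a direct pointwise Cauchy-Schwarz via the multiplicity of tubes tangent to $\mc{Z}(P_1)$ would lose a factor of $R^{3/2}/l$ predicted by the polynomial Wolff count of Lemma \ref{polylemma}; frequency orthogonality across $\theta$ is precisely what recovers this factor and lands the bound at the claimed $l/R$.
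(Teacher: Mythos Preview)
Your three-step outline is sound and lands at the same endpoint as the paper, but the route is different. The paper does not decouple in frequency across $\theta$. Instead it covers $B_j\cap W$ by balls $Q$ of radius $R^{1/2}$; on each such $Q$ the cutoffs $\chi_{\widetilde T_T}$ are effectively constants (either $\widetilde T_T$ meets $Q$ or it does not), so one is left with a genuine sum of wave packets $\sum_{T:\,\widetilde T_T\cap Q\neq\emptyset}Eg_{2,T}$, to which the standard $L^2$-orthogonality (the $Ef$-analogue of Lemma~\ref{l2orthogonality}) applies directly. Summing over $Q$ gives $\sum_T \|Eg_{2,T}\|_{L^2(\sum_{Q:\,Q\cap\widetilde T_T\neq\emptyset} w_Q)}^2$, and then the essentially constant property of $|Eg_{2,T}|^2$ along $T$ converts this into the volume ratio $|\widetilde T_T|/|T\cap B_j|\sim l/R$ times $\|Eg_{2,T}\|_{L^2(w_{B_j})}^2$. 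So your Steps~1--2 are replaced by a single spatial localization, and your Step~3 matches the paper exactly.

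The one soft spot in your write-up is the phrase ``dominating the sharp cutoff $\chi_{\widetilde T_T}$ by a Schwartz bump.'' Pointwise domination $\chi\le\psi$ does \emph{not} imply $\|\sum_T\chi_T h_T\|_2\le\|\sum_T\psi_T h_T\|_2$ when the $h_T$ carry phases, so as stated this step does not go through. It can be repaired by standard means (e.g.\ decomposing $\widehat\chi_{\widetilde T_T}$ into a main piece supported in the dual box plus rapidly decaying dyadic shells, and running the Plancherel argument shell by shell), but this is exactly the nuisance the paper's approach sidesteps: by passing to $R^{1/2}$-balls first, the rough cutoffs are frozen to constants and the $L^2$-orthogonality is that of honest wave packets, with no Fourier tails to track. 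Your Steps~2 and~3 are fine as written.
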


One may think of this estimate as a counterpart of \cite[eq. (4.16)]{MR4205111} in the restriction problem setting.
\medskip

We assume this lemma for a moment and finish the proof of Proposition \ref{mainestimate}. By the Cauchy-Schwarz inequality, 
\begin{equation}
\begin{split}
    &\Big\||Eg_{1,j,-}|^{\frac12}
    \big|
    \sum_{T \in \T_{< 4\gamma} } \chi_{\widetilde{T}_{T}}Eg_{2,T}
    \big|^{\frac12}
    \Big\|_{L^{2}(B_j \cap N_1)} 
    \\&
    \lesssim \Big\|Eg_{1,j,-}
    \Big\|_{L^{2}(B_j)}^{\frac12}
    \Big\|
    \sum_{T \in \T_{< 4\gamma} } \chi_{\widetilde{T}_{T}}Eg_{2,T}
    \Big\|_{L^{2}(B_j \cap W)}^{\frac12}.
    \end{split}
\end{equation}
By Lemma \ref{L2estimate}, the above term is bounded by
\begin{equation}\label{451}
R^{O(\delta)}
    \big(\frac{l}{R}\big)^{\frac14}
    \|Eg_{1,j,-}
    \|_{L^{2}(B_j)}^{\frac12}
    \big(
    \sum_{T \in \T_{\mathrm{Leng}(l)}} 
    \|
     Eg_{2,T}
    \|_{L^{2}(w_{B_j} )}^2\big)^{\frac14}.
\end{equation}
By the standard $L^2$-estimate 
\begin{equation}\label{standardl2}
    \|Eg\|_{L^2(B_j)}^2 \lesssim R^{1-\delta} \|g\|_2^2,
\end{equation}
the term \eqref{451} is further bounded by
\begin{equation}
R^{O(\delta)}
    \big(\frac{l}{R}\big)^{\frac14}(R^{1-\delta})^{\frac12}
    \big( \sum_{T \in \T_{j,-}} \|g_{1,T}\|_2^2\big)^{\frac14}
    \big( \sum_{T \in \T_{\mathrm{Leng}(l)} }
    \| g_{2,T}\|_2^2\big)^{\frac14}.
\end{equation}

By \eqref{holder}, the $L^4$-estimate \eqref{l4claim}, and the $L^2$-estimate, we obtain
\begin{equation}
    \begin{split}
       & \Big\||Eg_{1,j,-}|^{\frac12}
    \big|
    \sum_{T \in \T_{<4\gamma} } \chi_{\widetilde{T}_{T}}Eg_{2,T}
    \big|^{\frac12}\Big\|_{L^{13/4}(B_j \cap N_1)}
    \\&\lessapprox R^{-\frac{1}{26}+O(\delta)}l^{\frac{3}{52}} \big( \sum_{T \in \T_{j,-}} \|g_{1,T}\|_2^2\big)^{\frac14}
    \big(\sum_{T \in \T_{\mathrm{Leng}(l) }}\| g_{2,T}\|_2^2\big)^{\frac14}.
    \end{split}
\end{equation}
We now apply Lemma \ref{polylemma}\footnote{By Wongkew's theorem \cite{MR1211391}, one can see that the assumption \eqref{wongkew} is satisfied and we can apply the lemma.} to the function $g_{1,j,-}$ with $n=3$, $r=R$, and $\rho=R^{1/2}$, by the $L^2$-orthogonality, we obtain
\begin{equation}
\begin{split}
    \sum_{T \in \T_{j,-}} \|g_{1,T}\|_2^2 &\lesssim \sum_{\theta}\sum_{T \in \T_{j,-}} \big\|g_{1,T}\big\|_{L^2(\theta)}^2 
    \\&
    \lesssim R^{1/2+O(\delta)}\max_{\theta}
    \Big(
    \sum_{T \in \T(\theta)}
    \big\|g_{1,T}\big\|_{L^2(\theta)}^2\Big) 
    \\&
    \lesssim R^{-1/2+O(\delta)}
    \max_{\theta}
    \big\|g_{1}\big\|_{L^2_{\mathrm{avg}}(\theta)}^2,
\end{split}
\end{equation}
and similarly, 
\begin{equation}
    \sum_{T \in \T_{\mathrm{Leng}(l)}} \| g_{2,T}\|_2^2
    \lesssim
    l^{-1}R^{1/2+O(\delta)}\max_{\theta} \|g_2\|_{L^2_{\mathrm{avg} }(\theta)}^2.
\end{equation}
Therefore, by combining these estimates with the $L^2$-orthogonality, we conclude that
\begin{equation}
    \begin{split}
       & \Big\||Eg_{1,j,-}|^{\frac12}
    \big|
    \sum_{T \in \T } \chi_{\widetilde{T}_{T}}Eg_{2,T}
    \big|^{\frac12}\Big\|_{L^{13/4}(B_j \cap N_1)}
    \\&\lessapprox R^{-\frac{1}{26}+O(\delta)}l^{\frac{1}{26}} \big(\avprod_{i=1}^{2}\|g_{i}\|_2\big)^{\frac12 \cdot \frac{12}{13} }
    (\avprod_{i=1}^{2}\max_{\theta}
    \| g_{i}\|_{L^2_{\mathrm{avg}}(\theta)}\big)^{\frac12 \cdot \frac{1}{13} }.
    \end{split}
\end{equation}
 Proposition \ref{mainestimate} follows by the upper bound $l \leq R$. \\

It remains to prove Lemma \ref{L2estimate}

\begin{proof}[Proof of Lemma \ref{L2estimate}] We cover $B_j \cap W$ by smaller balls $Q$ of radius $R^{1/2}$. By the $L^2$-orthogonality, we see that
\begin{equation}
    \Big\|\sum_{T \in \T_{<4\gamma} } \chi_{\widetilde{T}_{T}}Eg_{2,T}\Big\|_{L^2(Q)}^2 \lesssim R^{O(\delta)}
     \sum_{T \in \T_{\mathrm{Leng}(l)}: \widetilde{T}_T \cap Q \neq \emptyset }  \|  Eg_{2,T }\|_{L^2(w_Q)}^2.
\end{equation}
We sum over all the balls $Q$ intersecting  $B_j \cap W$ and obtain
\begin{equation}\label{348}
    \Big\|\sum_{T \in \T_{<4\gamma} } \chi_{\widetilde{T}_{T}}Eg_{2,T}\Big\|_{L^2(B_j \cap W)}^2 \lesssim R^{O(\delta)}
     \sum_{T \in \T_{\mathrm{Leng}(l)} }   \| Eg_{2,T}\|_{L^2( \sum_{Q: Q \cap \widetilde{T}_T \neq \emptyset } w_{Q})}^2.
\end{equation}
By an standard application of the essentially constant property (Lemma 6.4 of \cite{guth2018}), the right hand side of \eqref{348} is bounded by
\begin{equation}
R^{O(\delta)}
    \frac{|\widetilde{T}_T|}{|T \cap B_j|}
    \sum_{T \in \T_{\mathrm{Leng}(l)} }\|Eg_{2,T }\|_{L^2(w_{B_{j}})}^2.
\end{equation}
It suffices to recall that $\widetilde{T}_T$ has the dimension $5R^{1/2+\delta} \times 5R^{1/2+\delta} \times l$.
\end{proof}

\section{A proof of  theorem \ref{bilinearrestriction}: The remaining case}

In the previous two sections, we proved Proposition \ref{reducedmainestimate} by assuming Lemma \ref{onevariety}. 
In this section, we prove the lemma. Let us recall the lemma.
\begin{lem}\label{onevariety'}
For every pair of separated functions $g_1$ and $g_2$, one-dimensional transverse complete intersection $\mc{Z}(Q_1,Q_2)$ of degree at most $D_1$, it holds that
\begin{equation}\label{51}
\begin{split}
    &\Big\|\avprod_{i=1}^{2}|Eg_i|\Big\|_{L^{13/4}(B_R \cap N_{R^{1/2+\delta}}(\mc{Z}(Q_1,Q_2)))} 
    \\&
\leq C_{\epsilon}R^{-c\epsilon \delta}R^{10\epsilon} \big(\avprod_{i=1}^{2}\|g_i\|_{L^2}\big)^{\frac{12}{13}+\epsilon}
\big(\avprod_{i=1}^{2}\max_{\theta \in \mathcal{P}(R^{-1/2}) }\|g_i\|_{L^{2}_{\mathrm{avg}}(\theta)}\big)^{\frac{1}{13}-\epsilon}
\end{split}
\end{equation}
under the induction  hypothesis \eqref{inductionhypothesis}.
\end{lem}

The proof of this lemma shares some similarity to that for the low angle dominant case (Subsection \ref{lowanglecase1}). We only sketch the proof here. 
\\

Let us use the notation $Y:=B_R \cap N_{R^{1/2+\delta}}(\mc{Z}(Q_1,Q_2))$ for simplicity. We cover $Y$ by smaller balls ${B}_{j}$ of radius $R^{1-\delta}$.  Define transverse and tangential tubes with respect to the transverse complete intersection $\mc{Z}(Q_1,Q_2)$ as follows.

\begin{defi}[Tangential tube with respect to $\mc{Z}(Q_1,Q_2)$]
$\T_{j,\mathrm{tang} }$ is the set of all $T \in \T$ obeying the following  two conditions.
\begin{itemize}
    \item $T \cap {B}_j \cap Y \neq \emptyset$
    \item If $z$ is any  point of $\mc{Z}(Q_1,Q_2)$ lying in $2{B}_j \cap 10T$, then
    \begin{equation}
        \mathrm{Angle}(v(T), T_z (\mc{Z}(Q_1,Q_2))) \leq R^{-1/2+2\delta}.
    \end{equation}
\end{itemize}

\end{defi}

\begin{defi}[Transverse tube with respect to $\mc{Z}(Q_1,Q_2)$]
$\T_{j,\mathrm{trans} }$ is the set of all $T \in \T$ obeying the following two conditions.
\begin{itemize}
    \item $T \cap {B}_j \cap Y \neq \emptyset$
    \item There exists a  point of $\mc{Z}(Q_1,Q_2)$ lying in $2{B}_j \cap 10T$, so that
    \begin{equation}
        \mathrm{Angle}(v(T), T_z (\mc{Z}(Q_1,Q_2))) > R^{-1/2+2\delta}.
    \end{equation}
\end{itemize}
\end{defi}
Define $g_{i,j,\mathrm{trans} }:=\sum_{T \in \T_{j,\mathrm{trans} } }g_{i,T}$ and define $g_{i,j,\mathrm{tang} }$ similarly. Note that 
\begin{equation}
    Eg_i=Eg_{i,j,\mathrm{tang} }+Eg_{i,j,\mathrm{trans} }+\mathrm{RapDec}(R)\|g_i\|_2.
\end{equation}
By the triangle inequality, the left hand side of \eqref{51} is bounded by
\begin{equation}\label{fourterms'}
\begin{split}
    &\lessapprox \Big(\sum_{B_j}\||Eg_{1,j,\mathrm{trans} }Eg_{2,j,\mathrm{trans} }|^{1/2}\|_{L^{13/4}(Y \cap B_j)}^{13/4}\Big)^{\frac{4}{13}} \\&+\Big(\sum_{B_j}\||Eg_{1,j,\mathrm{tang} }Eg_{2,j,\mathrm{trans} }|^{1/2}\|_{L^{13/4}(Y \cap B_j)}^{13/4}\Big)^{\frac{4}{13}}
    \\&+\Big(\sum_{B_j}\||Eg_{1,j,\mathrm{trans} }Eg_{2,j,\mathrm{tang} }|^{1/2}\|_{L^{13/4}(Y \cap B_j)}^{13/4}\Big)^{\frac{4}{13}}
     \\&+\Big(\sum_{B_j}\||Eg_{1,j,\mathrm{tang} }Eg_{2,j,\mathrm{tang} }|^{1/2}\|_{L^{13/4}(Y \cap B_j)}^{13/4}\Big)^{\frac{4}{13}}.
\end{split}
\end{equation}

Let us consider the case that the first term dominates the others. By Lemma 5.7 of \cite{guth2018}, each tube $T \in \T$ belongs to $\T_{j,\mathrm{trans} }$ at most $O((D_1)^3)$ many $j$. By the $L^2$-orthogonality, this implies the following inequality:
\begin{equation}
    \sum_{B_j} \|g_{i,j,\mathrm{trans}}\|_2^2 \lesssim (D_1)^3 \|g_i\|_2^2.
\end{equation}
Hence, by following the arguments in the transverse wall case (Subsection \ref{transversewall}) line by line with $g_{i,j,\mathrm{trans}}$ replacing $f_{i,j,+}$, one can get the desired bound. Since the arguments are identical, we leave out the details.

Let us consider the case that the first term is dominated by the others. In this case, by replacing the summation by the maximum, it suffices to prove
\begin{equation}
\begin{split}
    &\||Eg_{1,j,\mathrm{tang} }Eg_{2}|^{1/2}\|_{L^{13/4}(Y \cap B_j)}
    \\&
    \lesssim R^{O(\delta)}
    \big(\avprod_{i=1}^{2}\|g_i\|_{L^2}\big)^{\frac{12}{13}}
\big(\avprod_{i=1}^{2}\max_{\theta \in \mathcal{P}(R^{-1/2}) }\|g_i\|_{L^{2}_{\mathrm{avg}}(\theta)}\big)^{\frac{1}{13}}
\end{split}
\end{equation}
for every function $g_1$ and $g_2$. By H\"{o}lder's inequality, the left hand side is bounded by a constant multiple of
\begin{equation}\label{57}
    \||Eg_{1,j,\mathrm{tang}}Eg_{2}|^{1/2}\|_{L^{2}(Y \cap B_j)}^{1/26}
    \||Eg_{1,j,\mathrm{tang}}Eg_{2}|^{1/2}\|_{L^{10/3}(Y \cap B_j)}^{25/26}.
\end{equation}
To treat the $L^2$-norm, we simply apply the Cauchy-Schwarz inequality and the standard $L^2$-estimate \eqref{standardl2}.
For the $L^{10/3}$-norm, we apply Tao's bilinear restriction estimate \cite{MR2033842}. Then the above term is bounded by
\begin{equation}\label{58}
    R^{1/52+O(\delta)}\|g_{1,j,\mathrm{tang}}\|_2^{1/2} \|g_2\|_2^{1/2}.
\end{equation}
By the polynomial Wolff axioms (Lemma \ref{polylemma}) with $n=3$, $r=R$, and $\rho=R^{1/2}$, we know that
\begin{equation}
    \|g_{1,j,\mathrm{tang}}\|_2^2 \lesssim R^{-1}R^{O(\delta)}\max_{\theta}\|g_1\|_{L^2_{\mathrm{avg}}(\theta)}^2.
\end{equation}
Therefore, \eqref{58} is bounded by
\begin{equation}
    R^{O(\delta)}
    \big(\avprod_{i=1}^{2}\|g_i\|_{L^2}\big)^{\frac{12}{13}}
\big(\avprod_{i=1}^{2}\max_{\theta \in \mathcal{P}(R^{-1/2}) }\|g_i\|_{L^{2}_{\mathrm{avg}}(\theta)}\big)^{\frac{1}{13}}
\end{equation}
and this completes the proof of the lemma.

	\bibliography{reference}{}
	\bibliographystyle{alpha}

\medskip

\medskip

\noindent Department of Mathematics, University of Wisconsin-Madison\\
\noindent Department of Mathematics, Massachusetts Institute of Technology\\
\emph{Email address}: changkeun.math@gmail.com

\end{document}